
\documentclass[12pt,reqno]{amsart}
\usepackage{amssymb,amsmath,amsthm,hyperref}
\oddsidemargin = 0.0cm \evensidemargin = 0.0cm \textwidth =6.3in
\textheight =8.1in

\newcommand{\sg}{\textnormal{sg}}

\newtheorem{theorem}{Theorem}
\newtheorem{lemma}[theorem]{Lemma}
\newtheorem{corollary}[theorem]{Corollary}
\newtheorem{proposition}[theorem]{Proposition}

\theoremstyle{remark}
\newtheorem*{remark}{Remark}
\theoremstyle{definition}

\newtheorem{definition}[theorem]{Definition}
\newtheorem{example}{Example}

\numberwithin{theorem}{section} \numberwithin{equation}{section}
\numberwithin{example}{section}

\title[Partial theta functions and Appell--Lerch sums]{On the dual nature of partial theta functions\\ and Appell--Lerch sums}

\author{Eric Mortenson}

\begin{document}

\date{12 June 2014}

\subjclass[2010]{11B65, 11F11, 11F27}

\keywords{Hecke-type double sums, Appell--Lerch sums, mock theta functions, indefinite theta series, partial theta functions}

\begin{abstract}
In recent work, Hickerson and the author demonstrated that it is useful to think of Appell--Lerch sums as partial theta functions.  This notion can be used to relate identities involving partial theta functions with identities involving Appell--Lerch sums.  In this sense, Appell--Lerch sums and partial theta functions appear to be dual to each other.  This duality theory is not unlike that found by Andrews between various sets of identities of Rogers-Ramanujan type with respect to Baxter's solution to the hard hexagon model of statistical mechanics.  As an application we construct bilateral $q$-series with mixed mock modular behaviour.  In subsequent work we see our bilateral series are well-suited for computing radial limits of Ramanujan's mock theta functions.
\end{abstract}

\address{School of Mathematics and Physics, University of Queensland, Brisbane 4072}
\address{Max-Planck-Institut f\"ur Mathematik, Vivatsgasse 7, 53111 Bonn, Germany}
\email{etmortenson@gmail.com}
\maketitle
\setcounter{section}{-1}

\section{Notation}\label{section:notation}

 Let $q$ be a nonzero complex number with $|q|<1$ and define $\mathbb{C}^*:=\mathbb{C}-\{0\}$.  Recall
\begin{gather}
(x)_n=(x;q)_n:=\prod_{i=0}^{n-1}(1-q^ix), \ \ (x)_{\infty}=(x;q)_{\infty}:=\prod_{i\ge 0}(1-q^ix),\notag \\
{\text{and }} \ \ j(x;q):=(x)_{\infty}(q/x)_{\infty}(q)_{\infty}=\sum_{n=-\infty}^{\infty}(-1)^nq^{\binom{n}{2}}x^n,\label{equation:theta-def}
\end{gather}
where in the last line the equivalence of product and sum follows from Jacobi's triple product identity.    Here $a$ and $m$ are integers with $m$ positive.  Define
\begin{gather*}
J_{a,m}:=j(q^a;q^m), \ \ J_m:=J_{m,3m}=\prod_{i\ge 1}(1-q^{mi}), \ {\text{and }}\overline{J}_{a,m}:=j(-q^a;q^m).
\end{gather*}
We will use the following definition of an Appell--Lerch sum \cite{HM}:
\begin{equation}
m(x,q,z):=\frac{1}{j(z;q)}\sum_{r=-\infty}^{\infty}\frac{(-1)^rq^{\binom{r}{2}}z^r}{1-q^{r-1}xz}.\label{equation:mdef-eq}
\end{equation}
The symbol $\sum^{\ast}$ indicates convergence problems, so care should be taken.
\section{Introduction}

In his last letter to Hardy, Ramanujan gave a list of seventeen functions which he called ``mock theta functions.''   \cite[p. xxxi]{R1}: ``{\em I am extremely sorry for not writing you a single letter up to now$\dots$I discovered very interesting functions recently which I call `Mock' $\vartheta$-functions.  Unlike the `False' $\vartheta$-theta functions (studied partially by Prof. Rogers in his interesting paper) they enter mathematics as beautifully as the ordinary theta functions$\dots$}''  Each mock theta function was defined by Ramanujan as a $q$-series convergent for $|q|<1$.  He stated that they have certain asymptotic properties as $q$ approaches a root of unity, similar to the properties of ordinary theta functions, but that they are not theta functions.  He also stated several identities relating some of the mock theta functions to each other.  Later, many more mock theta function identities were found in the Lost Notebook \cite{RLN}.

Numerous entries in the Lost Notebook expand Eulerian forms ($q$-hypergeometric series) in terms of theta functions (Rogers-Ramanujan type identities), Appell--Lerch sums (mock theta functions), or partial theta functions.   Although partial theta functions are arguably the least understood they do play significant roles in areas outside of number theory such as quantum invariants of $3$-manifolds \cite{LZ}.  Appell--Lerch sums also appear naturally in the context of black hole physics \cite{DMZ}.  One wants to understand the various types of Eulerian forms and how they relate to each other.  In this direction, Andrews \cite{A2} has recently produced $q$-hypergeometric formulas which simultaneously prove mock theta function identities and Rogers-Ramanujan type identities. 

 We note that a {\em false theta function} is simply a theta series (\ref{equation:theta-def}) but with the wrong signs; whereas a {\em partial theta function} is half of a theta series.  In some cases one can write one in terms of the other, see Example \ref{example:ex-1}.

In recent work \cite{HM}, Hickerson and the author introduced a connection between partial theta functions and Appell--Lerch sums---the building blocks of the classical mock theta functions---to obtain a general formula that expands a certain family of indefinite theta series in terms of Appell--Lerch sums and theta functions.  For a discussion of the heuristic see \cite[Section $2$]{HM}.
The techniques of \cite{HM} can be used to relate identities involving partial theta functions with identities involving Appell--Lerch sums.  In this sense, Appell--Lerch sums and partial theta functions appear to be dual to each other. 

Much can be learned from the transformation $q\rightarrow q^{-1}$.  For example, it is useful in problems in which the theory of partitions is applied to statistical mechanics \cite{A3, AB, ABF}.  Although $q\rightarrow q^{-1}$ may make sense in a $q$-hypergeometric series, it will not for a product.  In the duality theory initiated by Andrews \cite{A3} for various sets of identities of Rogers-Ramanjuan type with respect to Baxter's solution to the hard hexagon model of statistical mechanics \cite{Bax}, Andrews used finite versions, i.e. polynomial or rational function identities which converge to infinite $q$-series in the limit.  To translate between identities of Regions I and IV \cite{A3, Bax}, he used identities  which have origins in work of Schur \cite{A0,Sch}, 
 \cite[4.1, 4.2]{A3}:
\begin{align}
\sum_{j=0}^{\infty}q^{j^2}\left [\begin{matrix} N-j \\ j   \end{matrix} \right]_q&=\sum_{\lambda=-\infty}^{\infty}(-1)^{\lambda}q^{\lambda(5\lambda+1)/2}\left [\begin{matrix} N \\ \lfloor \frac{N-5\lambda}{2}   \rfloor  \end{matrix} \right]_q,\label{equation:Andrews-4.1}\\
\sum_{j=0}^{\infty}q^{j^2+j}\left [\begin{matrix} N-j \\ j   \end{matrix} \right]_q&=\sum_{\lambda=-\infty}^{\infty}(-1)^{\lambda}q^{\lambda(5\lambda-3)/2}\left [\begin{matrix} N+1 \\ \lfloor  \frac{N+1-5\lambda}{2}   \rfloor  +1 \end{matrix} \right]_q,\label{equation:Andrews-4.2}
\end{align}
where $\left [\begin{matrix} n \\ m   \end{matrix}\right ] _q=(q;q)_n/(q;q)_m(q;q)_{n-m}$ for $0\le m\le n$ and zero otherwise, and $\lfloor \cdot \rfloor$ is the greatest integer function.  When $N\rightarrow \infty$, the two identities become the Rogers-Ramanjuan identities, i.e. Region I.  To obtain the identities of Region IV, replace $N$ with $2N+a$, $a\in\{0,1\}$,  then replace $q$ with $q^{-1}$, multiply by the appropriate power of $q$ to have a polynomial, and then let $N\rightarrow \infty$.

For our dual notion we do not necessarily need finite versions to make sense of the side which is not a $q$-hypergeometric series.  To convert between identities expressing Eulerian forms in terms of  partial theta functions and identities expressing Eulerian forms in terms of Appell--Lerch sums (and vice versa): 

\begin{itemize}
\item[(i)] make the subsitution $q\rightarrow q^{-1}$ in the Eulerian form. 

\item[(ii)] Let $\rho=q^{-1}$ and use the identity $(a;\rho)_{n}=(a^{-1};q)_n(-a)^{n}\rho^{\binom{n}{2}}.$

\item[(iii)] Use the heuristic $m(x,q,z)\sim \sum_{r\ge 0}(-1)^rx^rq^{-\binom{r+1}{2}}$ where `$\sim$' means up to the addition of a theta function (see \cite[Section 2]{HM}).

\item[(iv)] Use numerical work to determine the remainder, for example one can use Maple to factor resulting $q$-series in terms of theta functions.
\end{itemize}

\begin{example}\label{example:ex-1}Multiple Eulerian forms may have the same Appell--Lerch sum expression for $|q|<1$.  As an example, we recall the second order mock theta function $B(q)$ \cite{HM}:
\begin{align}
B(q)=&\sum_{n= 0}^{\infty}\frac{q^{n}(-q;q^2)_n}{(q;q^2)_{n+1}}=\sum_{n=0}^{\infty}\frac{q^{n^2+n}(-q^2;q^2)_n}{(q;q^2)_{n+1}^2}=-q^{-1}m(1,q^4,q^3)
\end{align}
There is no reason to expect that the above two Eulerian forms will have anything in common with each other after making the substitution $q\rightarrow q^{-1}$.  Using (ii) and the heuristic (iii), the duals are respectively
{\allowdisplaybreaks \begin{align}
& \sum_{n= 0}^{\infty}\frac{(-1)^{n+1}q^{n+1}(-q;q^2)_n}{(q;q^2)_{n+1}}=-q\sum_{n=0}^{\infty}(-1)^{n}q^{2n(n+1)},\label{equation:B-first}\\
 &\sum_{n= 0}^{\infty}\frac{q^{2(n+1)}(-q^2;q^2)_n}{(q;q^2)_{n+1}^2}=-q\sum_{n=0}^{\infty}(-1)^{n}q^{2n(n+1)}
 +q\frac{J_4}{J_{1,2}}\Big (  \sum_{n=0}^{\infty}q^{-n}q^{3n(n+1)}-q\sum_{n=0}^{\infty}q^{n}q^{3n(n+1)}\Big ).\label{equation:B-second}
\end{align}}%
The right-hand side  of (\ref{equation:B-first}) is a {\em partial theta function}, while the right-hand side of (\ref{equation:B-second}) a {\em mixed partial theta function}.  We call the second term on the right of (\ref{equation:B-second}) the {\em mixed term}, where the term in parentheses is a {\em false theta function} which is the sum of two {\em partial theta functions}.  We see the partial theta functions in (\ref{equation:B-first}) and (\ref{equation:B-second}) are equal, but that (\ref{equation:B-second}) has an extra term.  There are many more such examples in which the heuristic  is effective in predicting the partial theta function (non-mixed term), but it is not immediately obvious how the heuristic can predict the mixed term.
\end{example}

How effective is the heuristic in predicting the partial theta functions and Appell--Lerch sums when there are multiple non-mixed terms?  This brings us to two recent mock theta functions of Andrews \cite{A2}:
\begin{example}\label{example:ex-2}
We recall from the Lost Notebook two partial theta functions identities ({Entry $6.5.1$ \cite{ABII}, also \cite[p. 31]{RLN}}):
\begin{align}
\sum_{n=0}^{\infty}\frac{q^n}{(-q)_{2n}}
&=\sum_{n=0}^{\infty}q^{12n^2+n}(1-q^{22n+11})+q\sum_{n=0}^{\infty}q^{12n^2+7n}(1-q^{10n+5})\label{equation:ABII-6.5.1A}
\end{align}
and
\begin{align}
\sum_{n=0}^{\infty}\frac{q^n}{(-q)_{2n+1}}
&=\sum_{n=0}^{\infty}q^{12n^2+5n}(1-q^{14n+7})+q^2\sum_{n=0}^{\infty}q^{12n^2+11n}(1-q^{2n+1}).\label{equation:ABII-6.5.1B}
\end{align}
We first consider (\ref{equation:ABII-6.5.1A}).  For the left-hand side, the substitution $q\rightarrow q^{-1}$ yields
\begin{equation}
\sum_{n=0}^{\infty}\frac{q^{2n^2}}{(-q)_{2n}}=:\overline{\psi}_0(q),
\end{equation}
where $\overline{\psi}_0(q)$ is the third order mock theta function discovered by Andrews \cite[$(1.14)$]{A2}.  From \cite[Theorem $1.5$]{M1} we have
\begin{align}
\overline{\psi}_0(q)&=2-2qg(-q,q^8)-{J_{1,2}\overline{J}_{3,8}}/{J_{2}}\label{equation:Andrews-psi0}\\
&=m(-q^{11},q^{24},q^{4})+m(-q^{11},q^{24},q^{22})+q^{-1}m(-q^5,q^{24},q^4)+q^{-1}m(-q^5,q^{24},q^{10}),\notag
\end{align}
where the last line is new but can be shown using the techniques of \cite{HM}.  We show that the heuristic takes us from the right-hand side of (\ref{equation:ABII-6.5.1A}) to the right-hand side of (\ref{equation:Andrews-psi0}).  We rewrite the right-hand side of (\ref{equation:ABII-6.5.1A}) and then make the substitution $q\rightarrow q^{-1}$:
\begin{align*}
\sum_{n=0}^{\infty}&(q^{-11})^nq^{24\binom{n+1}{2}}-q^{11}\sum_{n=0}^{\infty}(q^{11})^nq^{24\binom{n+1}{2}}+q\sum_{n=0}^{\infty}(q^{-5})^nq^{24\binom{n+1}{2}}
-q^6\sum_{n=0}^{\infty}(q^{5})^nq^{24\binom{n+1}{2}}\\
&\rightarrow  m(-q^{11},q^{24}, *)- q^{-11}m(-q^{-11},q^{24}, *) +q^{-1} m(-q^{5},q^{24}, *) -q^{-6}m(-q^{-5},q^{24}, *),
\end{align*}
where we have followed with the heuristic (iii), and the `$*$' indicates up to the addition of a theta function, see also (\ref{equation:m-change-z}).  Identity (\ref{equation:m-fnq-flip}) then shows  we can view (\ref{equation:Andrews-psi0}) as the dual of (\ref{equation:ABII-6.5.1A}).   For (\ref{equation:ABII-6.5.1B}), the substitution $q\rightarrow q^{-1}$ in the left-hand side yields
\begin{equation}
\sum_{n=0}^{\infty}\frac{q^{2n^2+2n+1}}{(-q)_{2n}}=:q\overline{\psi}_1(q),
\end{equation}
where $\overline{\psi}_1(q)$ is the third order mock theta function discovered by Andrews \cite[$(1.15)$]{A2}.  From \cite[Theorem $1.5$]{M1}, we have
\begin{align}
q\overline{\psi}_1(q)&=2q^3g(-q^3,q^8)+q{J_{1,2}\overline{J}_{1,8}}/{J_{2}}\label{equation:Andrews-psi1}\\
&=m(-q^{7},q^{24},q^{8})+m(-q^{7},q^{24},q^{16})-q^{-2}m(-q^{-1},q^{24},q^{8}) -q^{-3}m(-q^{-1},q^{24},q^{16}).\notag
\end{align}
Arguing as above shows that we can view (\ref{equation:Andrews-psi1}) as the dual of (\ref{equation:ABII-6.5.1B}).
\end{example}

In Section \ref{section:prelim} we recall basic facts about Appell--Lerch sums and Bailey pairs.   In Proposition \ref{proposition:eulerian-mxqz-prop} we rewrite several $q$-hypergeometric series found in the lost notebook in terms of Appell--Lerch sums.  In Section \ref{section:mock}, we prove the duals for two fifth order mock theta functions, the three seventh orders, and the four tenth orders.  We emphasize that all of the duals follow from the same conjugate Bailey pair and that all of the Bailey pairs are from \cite{S, Wa1}. We also prove identities for the duals of the four tenth orders, see (\ref{equation:tenth-dual-I})-(\ref{equation:tenth-dual-IV}).   In the first example we pointed out that it was not immediately obvious how one can obtain the mixed term using the heuristic.   This will be demonstrated in Section \ref{section:duals}, where we state the duals for many mixed partial theta functions in the Lost Notebook and also relate the mixed partial theta functions to mixed mock modular forms.  The relation to mixed mock modular forms will be referred to as a dual of second type.   Our duals of second type are useful for constructing bilateral $q$-series with mixed mock modular behaviour.

In subsequent work \cite{M2}, we see that our bilateral series with mixed mock modular behaviour are well-suited for computing the radial limits of mock theta functions (see [FOR1, FOR2]) and for addressing questions found at the end of [R].  In \cite{M2}, we present five more radial limit results which follow from mixed mock modular bilateral $q$-hypergeometric series.  We also obtain the mixed mock modular bilateral series for a universal mock theta function of Gordon and McIntosh.  The later bilateral series can be used to compute radial limits for many classical second, sixth, eighth, and tenth order mock theta functions.

On a final note, we point out the difference between this paper and \cite{BFR}.  The latter demonstrates families of two-parameter Eulerian forms which agree for $|q|<1$ (with perhaps other additional restrictions) but disagree once one has made $q\rightarrow q^{-1}$ , e.g. the new expressions agree on the partial theta function but disagree on the mixed term, like Example \ref{example:ex-1}.  What we demonstrate here is {\em how} to determine the structure of one type of identity given the structure of the other type.

\section{Preliminaries}\label{section:prelim}

\subsection{Bailey pairs}
A Bailey pair relative to $(a,q)$ is a pair of sequences $(\alpha_n,\beta_n)_{n\ge 0}$ such that
\begin{equation}
\beta_n=\sum_{r=0}^n\frac{\alpha_r}{(aq)_{n+r}(q)_{n-r}}.\label{equation:bp-def}
\end{equation}
A conjugate Bailey pair relative to $(a,q)$ is a pair of sequences  $(\delta_n,\gamma_n)_{n\ge 0}$, where
\begin{equation}
\gamma_n=\sum_{r=n}^{\infty}\frac{\delta_r}{(aq)_{r+n}(q)_{r-n}}.\label{equation:bp-conj-def}
\end{equation}
Given a Bailey pair and a conjugate Bailey pair, we have
\begin{equation}
\sum_{n=0}^{\infty}\beta_n\delta_n=\sum_{n=0}^{\infty}\alpha_n\gamma_n.\label{equation:bp-conj-id}
\end{equation}

\begin{lemma}\label{lemma:important-lemma}  For a Bailey pair $(\alpha_n,\beta_n)_{n\ge 0}$ relative to $(a,q)$,
\begin{equation}
\sum_{n=0}^{\infty}(-1)^nq^{\binom{n+1}{2}}(a)_n\beta_n=\frac{(q)_{\infty}}{(aq)_{\infty}}\sum_{n=0}^{\infty}(-1)^nq^{\binom{n+1}{2}}\frac{(a)_n}{(q)_n}\alpha_n.\label{equation:important-id}
\end{equation}
\end{lemma}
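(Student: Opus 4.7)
My plan is to apply the general Bailey pair identity (\ref{equation:bp-conj-id}) with the specific conjugate Bailey pair
\begin{equation*}
\delta_n := (-1)^n q^{\binom{n+1}{2}}(a)_n,\qquad \gamma_n := \frac{(q)_\infty}{(aq)_\infty}(-1)^n q^{\binom{n+1}{2}}\frac{(a)_n}{(q)_n}.
\end{equation*}
With these choices, $\sum_{n\ge 0}\beta_n\delta_n$ is exactly the left-hand side of (\ref{equation:important-id}) and $\sum_{n\ge 0}\alpha_n\gamma_n$ is exactly the right-hand side, so the lemma reduces to showing that $(\delta_n,\gamma_n)$ is a conjugate Bailey pair relative to $(a,q)$ in the sense of (\ref{equation:bp-conj-def}).

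To verify (\ref{equation:bp-conj-def}), I would reindex $r=n+k$ in $\sum_{r\ge n}\delta_r/[(aq)_{r+n}(q)_{r-n}]$ and use the standard factorisations $(a)_{n+k}=(a)_n(aq^n)_k$ and $(aq)_{2n+k}=(aq)_{2n}(aq^{2n+1})_k$, together with the elementary identity $\binom{n+k+1}{2}=\binom{n+1}{2}+nk+\binom{k+1}{2}$, to pull the $n$-dependent prefactor outside the sum. Matching the result with the proposed $\gamma_n$ reduces the claim to the single evaluation
\begin{equation*}
\sum_{k=0}^{\infty}\frac{(aq^n;q)_k}{(aq^{2n+1};q)_k(q;q)_k}(-1)^k q^{nk+\binom{k+1}{2}}=\frac{(q^{n+1};q)_\infty}{(aq^{2n+1};q)_\infty}.
\end{equation*}

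The main obstacle is recognising this last identity. Rewriting $(-1)^k q^{nk+\binom{k+1}{2}}=(-q^{n+1})^k q^{\binom{k}{2}}$ puts the left-hand side in the standard shape of a ${}_1\phi_1$ basic hypergeometric series, and it is exactly the case of the $q$-analog of Kummer's theorem,
\begin{equation*}
\sum_{k=0}^\infty \frac{(A;q)_k}{(C;q)_k(q;q)_k}(-C/A)^k q^{\binom{k}{2}}=\frac{(C/A;q)_\infty}{(C;q)_\infty},
\end{equation*}
with the specialisations $A=aq^n$ and $C=aq^{2n+1}$, so that the argument $C/A$ equals $q^{n+1}$. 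Once this classical identity is invoked the conjugate Bailey pair is established, and (\ref{equation:bp-conj-id}) delivers (\ref{equation:important-id}) without further work. Alternatively, one can bypass the language of conjugate Bailey pairs entirely: substitute the definition of $\beta_n$ into the left-hand side of (\ref{equation:important-id}), interchange the order of summation over $n\ge r\ge 0$, and reduce the inner sum in $n$ by the same $q$-Kummer identity; the two routes differ only in packaging.
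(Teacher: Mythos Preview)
Your proof is correct and follows essentially the same route as the paper: define the conjugate Bailey pair with $\delta_n=(-1)^nq^{\binom{n+1}{2}}(a)_n$ (the paper inserts an irrelevant constant factor $1/(1-a)$), reindex $r=n+k$, factor out the $n$-dependent prefactor, and evaluate the remaining ${}_1\phi_1$ sum by the standard summation ${}_1\phi_1(A;C;q,C/A)=(C/A)_\infty/(C)_\infty$ with $A=aq^n$, $C=aq^{2n+1}$. The only cosmetic difference is that the paper names the intermediate series explicitly as ${}_1\phi_1(aq^n;aq^{2n+1};q,q^{n+1})$ rather than calling the summation the $q$-Kummer theorem.
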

\begin{proof}[Proof of Lemma \ref{lemma:important-lemma}]
Define
\begin{equation}
\delta_n:=\frac{(-1)^nq^{\binom{n+1}{2}}(a)_n}{1-a}.
\end{equation}
Thus
{\allowdisplaybreaks \begin{align*}
\gamma_n&=\sum_{r=n}^{\infty}\frac{1}{(aq)_{r+n}(q)_{r-n}}\cdot \frac{(-1)^rq^{\binom{r+1}{2}}(a)_r}{1-a}\\
&=\sum_{r=0}^{\infty}\frac{(-1)^{r+n}q^{\binom{r+n+1}{2}}(a)_{r+n}}{(a)_{r+2n+1}(q)_{r}}\\
&=\frac{(-1)^nq^{\binom{n+1}{2}}(a)_n}{(a)_{2n+1}}\sum_{r=0}^{\infty}\frac{(-1)^{r}q^{\binom{r}{2}+r(n+1)}(aq^{n})_{r}}{(aq^{2n+1})_{r}(q)_{r}}\\
&=\frac{(-1)^nq^{\binom{n+1}{2}}(a)_n}{(a)_{2n+1}}\cdot{}_1\phi_1\big ( aq^{n};aq^{2n+1};q,q^{n+1}\big )\\
&=(-1)^nq^{\binom{n+1}{2}}\frac{(q)_{\infty}}{(q)_n}\frac{(a)_n}{(aq)_{\infty}}\frac{1}{(1-a)},
\end{align*}}%
where the last line follows from the well-known summation
\begin{equation*}
{}_1\phi_1(a;c;q,c/a)=\frac{(c/a)_{\infty}}{(c)_{\infty}}.\qedhere
\end{equation*}
\end{proof}

\subsection{Properties of the Appell--Lerch sums}\label{section:prop-mxqz}
Everything in this subsection can be found in \cite{HM}.   A simple shift in the summation index of (\ref{equation:mdef-eq}) yields another useful form for $m(x,q,z)$:
\begin{equation}
m(x,q,z)=\frac{-z}{j(z;q)}\sum_{r=-\infty}^{\infty}\frac{(-1)^rq^{\binom{r+1}{2}}z^r}{1-q^{r}xz}.\label{equation:alt-mdef-eq}
\end{equation}

The Appell--Lerch sum $m(x,q,z)$ satisfies several functional equations and identities, which we collect in the form of a proposition:

 \begin{proposition} \cite{Le} \cite[Section $2$]{HM} For generic $x,z,z_0,z_1\in \mathbb{C}^*$
{\allowdisplaybreaks \begin{subequations}
\begin{equation}
m(x,q,z)=m(x,q,qz),\label{equation:m-fnq-z}
\end{equation}
\begin{equation}
m(x,q,z)=x^{-1}m(x^{-1},q,z^{-1}),\label{equation:m-fnq-flip}
\end{equation}
\begin{equation}
m(qx,q,z)=1-xm(x,q,z),\label{equation:m-fnq-x}
\end{equation}
\begin{equation}
m(x,q,z_1)-m(x,q,z_0)=\frac{z_0J_1^3j(z_1/z_0;q)j(xz_0z_1;q)}{j(z_0;q)j(z_1;q)j(xz_0;q)j(xz_1;q)},\label{equation:m-change-z}
\end{equation}
\begin{equation}
m(x,q,z)=m(x,q,x^{-1}z^{-1}).\label{equation:m-fnq-zflip}
\end{equation}
\end{subequations}}
\end{proposition}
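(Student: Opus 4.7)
My plan is to verify (\ref{equation:m-fnq-z})--(\ref{equation:m-fnq-x}) by direct manipulation of the defining sum (\ref{equation:mdef-eq}), to prove (\ref{equation:m-change-z}) by a theta-function argument, and to deduce (\ref{equation:m-fnq-zflip}) from (\ref{equation:m-change-z}). For (\ref{equation:m-fnq-z}): in $m(x,q,qz)$, combine $q^{\binom{r}{2}}q^{r}=q^{\binom{r+1}{2}}$ and reindex $r\mapsto r-1$; the leftover prefactor $-z^{-1}$ is precisely $j(qz;q)/j(z;q)$. For (\ref{equation:m-fnq-flip}): reindex $r\mapsto -r$ in the sum for $m(x^{-1},q,z^{-1})$, use $\binom{-r}{2}=\binom{r+1}{2}$, rewrite $1-q^{-r-1}x^{-1}z^{-1}=-q^{-r-1}x^{-1}z^{-1}(1-q^{r+1}xz)$, and apply $j(z^{-1};q)=-z^{-1}j(z;q)$; the result matches $x\,m(x,q,z)$ in the alternate form (\ref{equation:alt-mdef-eq}). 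For (\ref{equation:m-fnq-x}) I instead prove the equivalent identity $x\,m(x,q,z)+m(qx,q,z)=1$: substituting the partial fraction
\[
\frac{x}{1-q^{r-1}xz}=-\frac{q^{1-r}}{z}+\frac{q^{1-r}/z}{1-q^{r-1}xz}
\]
into $x\,m(x,q,z)$ and reindexing $r\mapsto r+1$ on each piece, the first piece collapses to $j(z;q)/j(z;q)=1$ via Jacobi's triple product, while the second is exactly $-m(qx,q,z)$.

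The main obstacle is (\ref{equation:m-change-z}), which I would prove by a theta-function argument in the variable $z_1$, keeping $x$ and $z_0$ fixed. Clearing common factors, set
\[
H(z_1):=j(z_1;q)\,j(xz_1;q)\bigl[m(x,q,z_1)-m(x,q,z_0)\bigr], \qquad G(z_1):=\frac{z_0J_1^{3}\,j(z_1/z_0;q)\,j(xz_0z_1;q)}{j(z_0;q)\,j(xz_0;q)}.
\]
Each of $H$ and $G$ is entire in $z_1$: in $H$, the factor $j(z_1;q)$ cancels the poles of $m(x,q,z_1)$ along $q^{\Z}$ and $j(xz_1;q)$ cancels those along $q^{\Z}/x$; for $G$ this is manifest. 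A short calculation applying $j(qy;q)=-y^{-1}j(y;q)$ to each $j$-factor shows that both $H$ and $G$ satisfy the same quasi-periodicity $K(qz_1)=(xz_1^{2})^{-1}K(z_1)$. The space of entire functions on $\C^{*}$ with this quasi-periodicity is two-dimensional, so it suffices to match $H$ and $G$ at two points $z_1$ whose evaluation functionals are linearly independent on that space. At $z_1=z_0$ both vanish (trivially for $H$; via $j(1;q)=0$ for $G$). At $z_1=x^{-1}$, $j(xz_1;q)$ acquires a simple zero while $m(x,q,z_1)$ has a simple pole coming entirely from the $r=1$ term of (\ref{equation:mdef-eq}) with residue $-(x\,j(x;q))^{-1}$; combining these with $j(x^{-1};q)=-x^{-1}j(x;q)$ yields $H(x^{-1})=-J_1^{3}/x$, and the identity $j(y^{-1};q)=-y^{-1}j(y;q)$ applied to $G(x^{-1})$ gives the same value $-J_1^{3}/x$. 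Hence $H=G$, which is (\ref{equation:m-change-z}).

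Finally, (\ref{equation:m-fnq-zflip}) is the specialization $z_1=z$, $z_0=x^{-1}z^{-1}$ of (\ref{equation:m-change-z}): the right-hand side carries the factor $j(xz_0z_1;q)=j(1;q)=0$, so the left-hand side vanishes, giving $m(x,q,z)=m(x,q,x^{-1}z^{-1})$.
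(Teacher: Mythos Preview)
The paper does not actually prove this proposition; it is quoted with attribution to \cite{Le} and \cite[Section~2]{HM} and used as a black box. So there is no ``paper's proof'' to compare against, and the question is simply whether your argument stands on its own. It does: your derivations of (\ref{equation:m-fnq-z})--(\ref{equation:m-fnq-x}) from the definition are correct, and deducing (\ref{equation:m-fnq-zflip}) from (\ref{equation:m-change-z}) via $j(1;q)=0$ is the standard trick.

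For (\ref{equation:m-change-z}) your theta-function argument is also sound, but one step deserves a sentence of justification that you only gestured at. You matched $H$ and $G$ at $z_1=z_0$ (where both vanish) and at $z_1=x^{-1}$ (where both equal $-J_1^3/x$), and asserted these two evaluation functionals are linearly independent on the two-dimensional space $V=\{K:\ K(qz_1)=(xz_1^2)^{-1}K(z_1)\}$. That linear independence is not automatic: a nonzero element of $V$ has exactly two zeros in a fundamental annulus, and the product of those zeros is $x^{-1}$ modulo $q^{\Z}$ (write $K$ as $j(az_1;q)j(bz_1;q)$ with $ab=x$). Hence a nonzero $K\in V$ with $K(z_0)=K(x^{-1})=0$ would force $z_0\cdot x^{-1}\equiv x^{-1}$, i.e.\ $z_0\in q^{\Z}$, which is excluded by the genericity hypothesis. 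With that remark added, your proof of (\ref{equation:m-change-z}) is complete, and in fact this is essentially how the identity is handled in \cite{HM}.
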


We recall the universal mock theta function
\begin{equation}
g(x,q):=x^{-1}\Big ( -1 +\sum_{n=0}^{\infty}\frac{q^{n^2}}{(x)_{n+1}(q/x)_{n}} \Big ),\label{equation:g-def}
\end{equation}
as well as the easily shown
\begin{proposition} \label{proposition:newgid}For $x\ne 0$
\begin{equation}
g(x,q)=\sum_{n=0}^{\infty}\frac{q^{n(n+1)}}{(x)_{n+1}(q/x)_{n+1}}.
\end{equation}
\end{proposition}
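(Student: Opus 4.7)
The plan is to reduce the claim to a telescoping identity. Multiplying the desired formula by $x$ and using the definition \eqref{equation:g-def}, it suffices to show
\[
\sum_{n\ge 0}\frac{q^{n^2}}{(x)_{n+1}(q/x)_n}\;-\;x\sum_{n\ge 0}\frac{q^{n(n+1)}}{(x)_{n+1}(q/x)_{n+1}}\;=\;1.
\]
The two summands have almost the same denominator, so I would first bring them to the common denominator $(x)_{n+1}(q/x)_{n+1}$ using the trivial identity $(q/x)_{n+1}=(q/x)_n(1-q^{n+1}/x)$. This turns the difference into a single series whose $n$-th term is
\[
\frac{q^{n^2}-q^{n^2+n+1}/x-xq^{n^2+n}}{(x)_{n+1}(q/x)_{n+1}}.
\]

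The next step is to recognize this as a telescoping series. Concretely, I would guess the auxiliary sequence
\[
T_n := \frac{q^{n^2}}{(x)_n(q/x)_n},
\]
and verify by a short direct computation that $T_n-T_{n+1}$ equals the $n$-th term above. Expanding $(x)_{n+1}(q/x)_{n+1}=(x)_n(q/x)_n(1-xq^n)(1-q^{n+1}/x)$ in the numerator of $T_n$ and cancelling the $q^{n^2+2n+1}$ terms against $T_{n+1}=q^{(n+1)^2}/\bigl((x)_{n+1}(q/x)_{n+1}\bigr)$ gives exactly the required $q^{n^2}-q^{n^2+n+1}/x-xq^{n^2+n}$ in the numerator.

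Finally I would sum from $n=0$ to $N$ to obtain $T_0-T_{N+1}=1-q^{(N+1)^2}/\bigl((x)_{N+1}(q/x)_{N+1}\bigr)$ and let $N\to\infty$. For $|q|<1$ the numerator decays super-exponentially while the denominator converges to the nonzero constant $(x)_\infty(q/x)_\infty$ (for generic $x\ne 0$), so the tail vanishes and the sum equals $1$, completing the proof.

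I expect the only mildly nontrivial step to be guessing the correct telescoping sequence $T_n=q^{n^2}/\bigl((x)_n(q/x)_n\bigr)$; once this is in hand, every remaining manipulation is a one-line verification. The case handling for specific $x$ at which some factor $(x)_{n+1}$ or $(q/x)_{n+1}$ vanishes is easily dealt with by regarding the identity generically in $x$ and then appealing to continuity where the denominators on both sides of the final identity are nonzero.
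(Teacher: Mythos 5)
Your proof is correct, and it is worth noting that the paper itself gives no argument here: Proposition \ref{proposition:newgid} is simply introduced as ``easily shown,'' so your telescoping argument actually supplies the missing proof rather than duplicating one. I checked the key algebra: with $T_n=q^{n^2}/\bigl((x)_n(q/x)_n\bigr)$ and the factorizations $(x)_{n+1}=(x)_n(1-xq^n)$, $(q/x)_{n+1}=(q/x)_n(1-q^{n+1}/x)$, one indeed gets
\begin{equation*}
T_n-T_{n+1}=\frac{q^{n^2}-q^{n^2+n+1}/x-xq^{n^2+n}}{(x)_{n+1}(q/x)_{n+1}}
=\frac{q^{n^2}}{(x)_{n+1}(q/x)_n}-\frac{xq^{n(n+1)}}{(x)_{n+1}(q/x)_{n+1}},
\end{equation*}
so the partial sums collapse to $T_0-T_{N+1}=1-q^{(N+1)^2}/\bigl((x)_{N+1}(q/x)_{N+1}\bigr)$, and for $|q|<1$ with $x$ avoiding the points $q^{-j}$ ($j\ge 0$) and $q^{j}$ ($j\ge 1$) the tail vanishes and both series converge absolutely, which is exactly what is needed to split the telescoped sum and conclude via the definition \eqref{equation:g-def}. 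One small remark: the closing appeal to continuity is unnecessary, since the term-by-term identity is purely algebraic and holds wherever the denominators are nonzero, which is precisely the set on which both sides of the proposition are defined (at the excluded points $g(x,q)$ itself has poles, so there is nothing to prove there). An alternative route would have been to quote the known three-term or symmetry properties of the universal mock theta function $g$, but your self-contained telescoping is simpler and entirely adequate.
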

\noindent We recall from \cite[Theorem $2.2$]{H1}, \cite[Proposition $3.2$]{HM} an expression relating $g(x,q)$ to $m(x,q,z)$:
\begin{equation}
g(x,q)=-x^{-1}m(q^2x^{-3},q^3,x^2)-x^{-2}m(qx^{-3},q^3,x^2).\label{equation:g-to-m}
\end{equation}
\subsection{Hecke-type double sums}  Here we recall a definition \cite{HM}
\begin{definition} \label{definition:fabc-def}  Let $x,y\in\mathbb{C}^*$ and define $\sg (r):=1$ for $r\ge 0$ and $\sg(r):=-1$ for $r<0$. Then
\begin{equation}
f_{a,b,c}(x,y,q):=\sum_{\substack{\sg (r)=\sg(s)}} \sg(r)(-1)^{r+s}x^ry^sq^{a\binom{r}{2}+brs+c\binom{s}{2}}.\label{equation:fabc-def}\\
\end{equation}
\end{definition}
We give a special case of \cite[Theorem $0.4$ ]{HM} in which $a=b=2$, $c=1$:
\begin{proposition}\label{proposition:f221}We have
\begin{align*}
f_{2,2,1}(x,y,q)&=j(x;q^2)m(-qy/x,q,-1)+j(y;q)m(qx/y^2,q^2,-1)\\
&\ \ \ \ -\frac{1}{\overline{J}_{0,1}\overline{J}_{0,2}}\sum_{d=0}^1\frac{q^{\binom{d+1}{2}}j(q^{d+1}y;q^2)j(-q^{1-d}x/y;q^2)J_2^3j(-q^{2+d}/y;q^2)}{j(-q^{1}x/y^2;q^2)j(q^{d+1}y/x;q^2)}.
\end{align*}
\end{proposition}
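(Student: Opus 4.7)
The plan is to derive this identity by specializing the author and Hickerson's general theorem [HM, Theorem 0.4], which expresses $f_{a,b,c}(x,y,q)$ (under the appropriate indefiniteness hypotheses on the coefficients) in terms of two Appell--Lerch sums plus an explicit theta-quotient correction. Here $(a,b,c)=(2,2,1)$, and the relevant discriminant $b^2-ac$ equals $2$, which explains the range $d\in\{0,1\}$ appearing in the sum on the right.

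The first step is to substitute $(a,b,c)=(2,2,1)$ into the general identity. The quadratic exponent $2\binom{r}{2}+2rs+\binom{s}{2}$ admits two natural completions of the square. Completing in $r$ converts the $r$-sum, after splitting along $\sg(r)=\sg(s)$, into a theta factor $j(x;q^2)$ paired with an inner sum in $s$ which by (\ref{equation:mdef-eq}) is recognized as $m(-qy/x,q,-1)$. Completing instead in $s$ converts the $s$-sum into a theta factor $j(y;q)$ paired with an inner sum in $r$ at modulus $q^2$, which similarly is $m(qx/y^2,q^2,-1)$. These account for the first two terms on the right.

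The second step is to produce the theta-quotient correction. Following the recipe of [HM, Theorem 0.4], this correction arises from reconciling the two different completions of the square via an iterated application of the change-of-$z$ identity (\ref{equation:m-change-z}); the number of such applications equals $b^2-ac=2$, and they are indexed by $d\in\{0,1\}$. Each resulting summand is a ratio of $j(\pm q^{\alpha}\,\cdot\,;q^2)$ factors with arguments prescribed by $(a,b,c,x,y)$, multiplied by the universal prefactor $J_2^3$ from (\ref{equation:m-change-z}) and the normalization $1/(\overline{J}_{0,1}\overline{J}_{0,2})$; substituting the specific parameters reproduces the displayed sum, including the shift $q^{\binom{d+1}{2}}$.

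The main obstacle is bookkeeping: matching the sign, the power of $q$, and the argument of each theta factor in the general template to the normalized shape given in the statement. This requires repeated use of the quasi-periodicity $j(q^n z;q^m)=(-z)^{-n}q^{-m\binom{n}{2}}j(z;q^m)$ together with the reflection $j(z;q^m)=-z\,j(q^m/z;q^m)$ to bring each factor $j(\cdot\,;q^2)$ into the form displayed. Once the specialization and these normalizations are verified, the proposition follows directly from [HM, Theorem 0.4].
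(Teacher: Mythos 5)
Your proposal takes essentially the same route as the paper: the paper gives no independent argument for Proposition \ref{proposition:f221}, presenting it precisely as the special case $a=b=2$, $c=1$ of \cite[Theorem 0.4]{HM}, which is exactly the specialization you carry out (your sketch of how that general theorem is proved internally is only a heuristic gloss, but the operative step of quoting and specializing the cited theorem coincides with the paper's).
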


\subsection{$q$-hypergeometric series as Appell--Lerch sums}
The first proposition is based on equations of \cite{RLN} many of which were proved in \cite{AM}.
\begin{proposition}\label{proposition:eulerian-mxqz-prop} We have
{\allowdisplaybreaks \begin{align}
(1+x^{-1})\sum_{n= 0}^{\infty}\frac{q^{n+1}(-q)_{2n}}{(qx,q/x;q^2)_{n+1}}&=-m(x,q^2,q)\label{equation:RLNid1}\\
\sum_{n= 0}^{\infty}\frac{(-1)^nq^{n^2}(q;q^2)_{n}}{(-x;q^2)_{n+1}(-q^2/x;q^2)_{n}}&=m(x,q,-1)+\frac{J_{1,2}^2}{2j(-x;q)}\label{equation:RLNid2}\\
&=2m(x,q,-1)-m(x,q,\sqrt{-q/x}) \notag \\
& =m(-qx^2,q^4,-q^{-1})-q^{-1}xm(-q^{-1}x^2,q^4,-q)\notag\\
\sum_{n= 0}^{\infty}{}^{\ast}\frac{(-1)^n(q;q^2)_n}{(-x)_{n+1}(-q/x)_n}&=m(x,q,-1)\label{equation:RLNid3}\\
\Big ( 1+\frac{1}{x}\Big )\sum_{n=0}^{\infty}\frac{(-1)^n(q;q^2)_nq^{(n+1)^2}}{(-xq,-q/x;q^2)_{n+1}}&=m(x,q,-1)-\frac{J_{1,2}^2}{2j(-x;q)}\label{equation:RLNid4}\\
\sum_{n= 0}^{\infty}\frac{(-1)^nq^{2n^2}(q^2;q^4)_{n}}{(-x;q^4)_{n+1}(-q^4/x;q^4)_{n}}&=m(x,q^2,q)+\frac{\overline{J}_{1,4}^2j(-xq^2;q^4)}{j(-x;q^4)j(xq;q^2)}\label{equation:RLNid5}
\end{align}}%
where \cite[Entry $12.3.3$]{ABI}
\begin{equation}
\sum_{n= 0}^{\infty}{}^{\ast}\frac{(-1)^n(q;q^2)_n}{(-x)_{n+1}(-q/x)_n}
:=\frac{1}{\overline{J}_{0,1}}\sum_{n=-\infty}^{\infty}\frac{(1+1/x)q^{n(n+1)/2}}{(1+xq^n)(1+q^n/x)}.\label{equation:sumstar-def}
\end{equation}
\end{proposition}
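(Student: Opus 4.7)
The plan is to derive each of (\ref{equation:RLNid1})--(\ref{equation:RLNid5}) by applying Lemma~\ref{lemma:important-lemma} to a suitable Bailey pair from the tables of Slater~\cite{S} and Watson~\cite{Wa1}, and then recognising the resulting $\alpha$-side as an Appell--Lerch sum.  For each of (\ref{equation:RLNid1}), (\ref{equation:RLNid2}), (\ref{equation:RLNid4}) and (\ref{equation:RLNid5}), I would first select a Bailey pair $(\alpha_n,\beta_n)$ relative to some $(a,q)$ (or relative to $(a,q^2)$ at higher modulus for (\ref{equation:RLNid1}) and (\ref{equation:RLNid5})) whose $(-1)^nq^{\binom{n+1}{2}}(a)_n\beta_n$ reproduces the summand of the Eulerian form on the left-hand side.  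Lemma~\ref{lemma:important-lemma} then rewrites this sum as
\begin{equation*}
\frac{(q)_{\infty}}{(aq)_{\infty}}\sum_{n\ge 0}(-1)^nq^{\binom{n+1}{2}}\frac{(a)_n}{(q)_n}\alpha_n,
\end{equation*}
and substituting Slater's explicit $\alpha_n$ (a theta-type one-sided series) produces a summand whose shape matches the definition (\ref{equation:mdef-eq}) or the shifted form (\ref{equation:alt-mdef-eq}) of an Appell--Lerch sum.  Any residual theta correction---such as the $J_{1,2}^2/(2j(-x;q))$ in (\ref{equation:RLNid2}) and (\ref{equation:RLNid4}) or the $\overline{J}$-ratio in (\ref{equation:RLNid5})---arises from the discrepancy between the one-sided Bailey $\alpha$-sum and the bilateral Appell--Lerch series, and should be computed explicitly using the Jacobi triple product.

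For (\ref{equation:RLNid3}), which carries no theta correction, I would argue directly from the definition (\ref{equation:sumstar-def}) of the starred sum.  The partial-fraction identity
\begin{equation*}
\frac{1+1/x}{(1+xq^n)(1+q^n/x)}=\frac{1}{1+q^n}\left(\frac{1}{1+xq^n}+\frac{1}{x+q^n}\right),
\end{equation*}
combined with the substitution $n\mapsto -n$ in the second piece, collapses the bilateral series to $\overline{J}_{0,1}^{-1}\sum_{n\in\Z}q^{n(n+1)/2}/(1+xq^n)$, which is $m(x,q,-1)$ by (\ref{equation:mdef-eq}) with $z=-1$ after a single shift of the summation index.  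The alternate forms listed under (\ref{equation:RLNid2}) then follow by invoking the change-of-$z$ formula (\ref{equation:m-change-z}) twice: once at $(z_0,z_1)=(-1,\sqrt{-q/x})$ to produce the second line, and again---together with a Jacobi triple product rearrangement---to split the modulus from $q$ to $q^4$ in the third line.

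The main obstacle is bookkeeping the residual theta contributions.  The heuristic (iii) of the introduction correctly predicts the leading $m(x,q,z)$ in each identity, but the precise compact form of the auxiliary theta functions must be extracted from the Slater $\alpha$-series, and rewriting those contributions as ratios of $J_{a,m}$'s and $\overline{J}_{a,m}$'s requires patient use of Jacobi's triple product together with the functional equations (\ref{equation:m-fnq-z})--(\ref{equation:m-fnq-zflip}).
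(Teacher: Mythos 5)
Your plan for (\ref{equation:RLNid1}), (\ref{equation:RLNid2}), (\ref{equation:RLNid4}) and (\ref{equation:RLNid5}) has a genuine gap: these are \emph{two-variable} identities with a free parameter $x$, and the Bailey pairs in the Slater--Warnaar tables \cite{S,Wa1} (note: \cite{Wa1} is Warnaar, not Watson) contain no such parameter, so there is no pair whose $(-1)^nq^{\binom{n+1}{2}}(a)_n\beta_n$ can reproduce summands containing $(-x;q^2)_{n+1}(-q^2/x;q^2)_n$ and the like. Worse, for (\ref{equation:RLNid1}) the matching is impossible even in principle: any $\beta_n$ satisfying (\ref{equation:bp-def}) with theta-type $\alpha_r$ stays bounded, so the summands on the left of Lemma \ref{lemma:important-lemma} decay like $q^{n^2/2}$, whereas the summand of (\ref{equation:RLNid1}) decays only like $q^{n}$. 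Two further steps in your sketch do not go through as stated: (\ref{equation:m-change-z}) only changes $z$ and cannot produce the passage from modulus $q$ to modulus $q^4$ in the third displayed form of (\ref{equation:RLNid2}), and ``computing the residual theta corrections explicitly'' from a discrepancy between one-sided and bilateral series is not carried out and is not a proof. The paper proceeds quite differently: it quotes Lost Notebook entries already proved in \cite{ABI} (Entries $12.3.9$, $12.4.2$, $12.2.1$, $12.3.3$, $12.3.2$, $12.4.3$), which express each Eulerian form as a Lerch-type bilateral sum with the theta correction built in, and then only needs elementary index rearrangements together with (\ref{equation:alt-mdef-eq}), (\ref{equation:m-fnq-flip}), (\ref{equation:m-fnq-zflip}) and (\ref{equation:m-change-z}); for instance the modulus-$q^4$ form of (\ref{equation:RLNid2}) comes from writing $\tfrac{1}{1+q^{2n}x}=\tfrac{1-q^{2n}x}{1-q^{4n}x^2}$ inside Entry $12.2.1$, not from a change of $z$.

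Your treatment of (\ref{equation:RLNid3}) is the one part that does work, and it is a genuinely different (and more self-contained) route than the paper's: starting from the definition (\ref{equation:sumstar-def}), your decomposition is correct, and after $n\mapsto -n$ in the second piece the factor $1+q^n$ cancels because $q^{n(n+1)/2}+q^{n(n+3)/2}=q^{n(n+1)/2}(1+q^n)$, leaving $\overline{J}_{0,1}^{-1}\sum_{n\in\Z}q^{n(n+1)/2}/(1+xq^n)=m(x,q,-1)$ by (\ref{equation:alt-mdef-eq}); the paper instead deduces (\ref{equation:RLNid3}) indirectly from Entry $12.3.3$ of \cite{ABI} combined with (\ref{equation:RLNid2}). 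That single success, however, does not repair the other four identities, so the proposal as a whole is not a viable proof of the proposition.
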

Although there does not appear to be a way to represent $m(x,q,z)$ as an Eulerian form, one can write $m(x,q,z)$ as a bilateral sum:
\begin{proposition}\label{proposition:bilateral-mxqz-prop} For $a,b\ne 0$,
\begin{align}
\sum_{n=-\infty}^{\infty}&\frac{a^{-n-1}b^{-n}}{(-1/a;q)_{n+1}(-q/b;q)_n}q^{n^2}\\
&=\sum_{n=-\infty}^{\infty}(-aq;q)_{n}(-b;q)_{n+1}q^{n+1}=\frac{(-aq)_{\infty}}{b(q)_{\infty}(-q/b)_{\infty}}j(-b;q)m\big ( a/b,q,-b\big ).\notag
\end{align}
\end{proposition}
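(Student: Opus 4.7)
My plan for the first equality is to substitute $n \to -n-1$ in the leftmost sum and apply the inversion identity $(x;q)_{-n} = (-x)^{-n} q^{n(n+1)/2}/(q/x;q)_n$: with $x = -1/a$ it converts $(-1/a;q)_{-n}$ into $(-aq;q)_n$, and with $x = -q/b$ it converts $(-q/b;q)_{-n-1}$ into $(-b;q)_{n+1}$. The prefactor $a^{-n-1}b^{-n}q^{n^2}$ absorbs the bookkeeping exponents produced by the inversion, leaving exactly $q^{n+1}$; this is a short termwise verification.

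For the second equality, I would unfold $j(-b;q)\,m(a/b,q,-b) = \sum_{r\in\Z} b^r q^{\binom{r}{2}}/(1+aq^{r-1})$ from definition (\ref{equation:mdef-eq}), so that the right-hand side equals
\[R(a) := \frac{(-aq;q)_\infty}{b(q;q)_\infty(-q/b;q)_\infty}\sum_{r\in\Z}\frac{b^r q^{\binom{r}{2}}}{1+aq^{r-1}}.\]
Denoting the bilateral sum by $L(a)$, both $L$ and $R$ are meromorphic on $\C^*$ with the same set of simple poles $\{-q^j : j \geq 0\}$: on the $L$ side they come from the factor $(-aq;q)_{-m}$ in the $n = -m \leq -1$ terms, and on the $R$ side from the $r = 1-j \leq 1$ terms of the sum after the zeros of $(-aq;q)_\infty$ cancel the remaining poles. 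My strategy is to match residues pole-by-pole and then pin down the holomorphic difference.

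The residue of $R$ at $a = -q^j$ is immediately $b^{-j}q^{\binom{j+1}{2}}/[(q;q)_j(-q/b;q)_\infty]$. On the $L$ side, I would collect residues of $(-aq;q)_{-m}$ at $a = -q^j$ over $m \geq j+1$, reindex by $k := m-j-1$, and insert $(-b;q)_{1-m}$ via the inversion formula; this reduces the residue of $L$ to
\[\frac{q^{\binom{j+1}{2}}b^{-j}}{(q;q)_j(-q/b;q)_j}\sum_{k\geq 0}\frac{c^k q^{k(k-1)}}{(q;q)_k(c;q)_k},\qquad c := -q^{j+1}/b.\]
Matching the two residues then amounts to the auxiliary identity
\[\sum_{k\geq 0}\frac{c^k q^{k(k-1)}}{(q;q)_k(c;q)_k}=\frac{1}{(c;q)_\infty},\]
which I would prove by multiplying by $(c;q)_\infty$, expanding $(c q^k;q)_\infty$ via Euler's $q$-exponential, swapping the order of summation, and collapsing the inner sum by the identity $\sum_{n=0}^N\binom{N}{n}_q(-1)^n q^{\binom{n}{2}} = \delta_{N,0}$ (the $z=1$ specialisation of the $q$-binomial theorem). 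Once residues match, $L - R$ is holomorphic on $\C^*$, and equality follows by comparing the two sides at any single regular point where both admit a convergent evaluation. The principal obstacle is this residue bookkeeping together with the auxiliary identity: without it, the infinite residue sum on the bilateral side does not visibly collapse to the single-term Appell--Lerch residue and the proof stalls.
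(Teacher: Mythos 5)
Your reindexing argument for the first equality is exactly the paper's: it is the substitution $n\to-n-1$ together with (\ref{equation:minus-n}), and your bookkeeping there is correct. For the second equality you depart from the paper, which simply quotes Entry 3.4.7 of Andrews--Berndt (identity (\ref{equation:3.4.7-ABII}), a consequence of Bailey's ${}_2\psi_2$ transformation) and recombines the two unilateral sums into $j(-b;q)m(a/b,q,-b)$. Your replacement is a from-scratch analytic proof by matching poles and residues in the variable $a$. The pole locations, the residue of $R$ at $a=-q^j$, and the auxiliary identity are all fine; indeed the auxiliary sum $\sum_{k\ge0}c^kq^{k(k-1)}/\big((q;q)_k(c;q)_k\big)=1/(c;q)_\infty$ is just the $a\to\infty$ limit of the ${}_1\phi_1$ summation already invoked in Lemma \ref{lemma:important-lemma}, so no $q$-binomial gymnastics are needed there. (You should also say a word about why the residue of $L$ equals the sum of the residues of its individual terms, since infinitely many terms contribute at each pole; this is a routine uniform-convergence check, but it is not automatic.)

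The genuine gap is the final step. Knowing that $L-R$ is holomorphic on $\C^*$ and vanishes at one regular point does not force $L-R\equiv0$: a holomorphic function on $\C^*$ can vanish at a point without vanishing identically, and here you cannot even appeal to behaviour at $0$ or $\infty$ cheaply, since the cancelled poles $a=-q^j$ accumulate at $a=0$ and both sides grow as $a\to\infty$. Residue matching must be supplemented by a rigidity ingredient: for instance, show that $L$ and $R$ satisfy the same first-order $q$-difference equation under $a\mapsto qa$ (with theta-function inhomogeneity), so that after the residues cancel the difference satisfies the homogeneous equation and, being holomorphic on $\C^*$, is forced by its Laurent expansion to be either $0$ or a specific theta-type function that one then rules out by a single evaluation; alternatively, fix $a$ and compare Laurent coefficients in $b$. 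Without some such argument the proof stalls exactly where you declare victory, whereas the paper sidesteps the whole issue by citing the ${}_2\psi_2$-based Entry 3.4.7.
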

\begin{proof} This follows from \cite[p. $15$]{RLN}, \cite[Entry $3.4.7$]{ABII}
\begin{align}
&\sum_{n=0}^{\infty}\frac{a^{-n-1}b^{-n}}{(-1/a;q)_{n+1}(-q/b;q)_n}q^{n^2}\label{equation:3.4.7-ABII}\\
&+\sum_{n=1}^{\infty}(-aq;q)_{n-1}(-b;q)_{n}q^{n}=\frac{(-aq)_{\infty}}{(q)_{\infty}(-q/b)_{\infty}}\Big ( \sum_{n=0}^{\infty}\frac{b^nq^{n(n+1)/2}}{1+aq^n}
+\frac{1}{a}\sum_{n=1}^{\infty}\frac{b^{-n}q^{n(n+1)/2}}{1+q^n/a}\Big )\notag
\end{align}
and
\begin{equation}
(a;q)_{-n}=\frac{(-1)^na^{-n}q^{n(n+1)/2}}{(q/a;q)_n}.\label{equation:minus-n}
\end{equation}
Equation (\ref{equation:3.4.7-ABII}) follows from a ${}_2\psi_2$ transformation of Bailey \cite{Ba}, see \cite[Entry $3.4.7$]{ABII}.
\end{proof}

\begin{proof}[Proof of Proposition \ref{proposition:eulerian-mxqz-prop}]We prove (\ref{equation:RLNid1}).  Entry $12.3.9$ \cite {ABI}, \cite[p. 5]{RLN}, states that
\begin{equation}
(1+a^{-1})\sum_{n=0}^{\infty}\frac{q^{n+1}(-q)_{2n}}{(aq,q/a;q^2)_{n+1}}=\frac{1}{J_{1,2}}\sum_{n=1}^{\infty}(-1)^{n-1}\Big ( \frac{q^{n^2}}{1-aq^{2n-1}}+\frac{q^{n^2}}{a-q^{2n-1}}\Big ).\label{equation:eq6.3}
\end{equation}
Setting $n=1-r$, we see that
\begin{equation}
\sum_{n=1}^{\infty}\frac{(-1)^{n-1}q^{n^2}}{a-q^{2n-1}}=\sum_{r\le 0}\frac{(-1)^{r}q^{(1-r)^2}}{a-q^{1-2r}}=\sum_{r\le 0}\frac{(-1)^{r-1}q^{r^2}}{1-aq^{2r-1}},
\end{equation}
so the right side of (\ref{equation:eq6.3}) equals
\begin{equation}
-\frac{1}{J_{1,2}}\sum_{n=-\infty}^{\infty}\frac{(-1)^rq^{n^2}}{1-q^{2n-1}a}=-m(a,q^2,q).
\end{equation}
Replacing $a$ by $x$ gives (\ref{equation:RLNid1}).

We prove (\ref{equation:RLNid2}).   Entry $12.4.2$ \cite {ABI}, \cite[p. 5]{RLN}, states that
{\allowdisplaybreaks \begin{align}
j(-a;q)\sum_{n=0}^{\infty}& \frac{(-1)^nq^{n^2}(q;q^2)_n}{(-a;q^2)_{n+1}(-q^2/a;q^2)_n}=1+\sum_{n=1}^{\infty}(a^n+a^{-n}+2(-1)^n)\frac{q^{n(n+1)/2}}{1+q^n}\label{equation:eq6.6}\\
&=\sum_{n=-\infty}^{\infty}\frac{a^nq^{\binom{n+1}{2}}}{1+q^n}+\frac{1}{2}\frac{(q)_{\infty}^2}{(-q)_{\infty}^2}=\sum_{n=-\infty}^{\infty}\frac{q^{\binom{n+1}{2}}a^n}{1+q^n}+\frac{1}{2}J_{1,2}^2.\notag
\end{align}}%
Hence, dividing the extreme left and right of (\ref{equation:eq6.6}) by $j(-a;q)$ gives
\begin{align}
\sum_{n=0}^{\infty}&\frac{(-1)^nq^{n^2}(q;q^2)_n}{(-a;q^2)_{n+1}(-q^2/a;q^2)_n}=\frac{1}{j(-a;q)}\sum_{n=-\infty}^{\infty}\frac{q^{\binom{n+1}{2}}a^n}{1+q^n}+\frac{J_{1,2}^2}{2j(-a;q)}\label{equation:eq6.9}\\
&=a^{-1}m(a^{-1},q,-a)+\frac{J_{1,2}^2}{2j(-a;q)}\notag\\
&=m(a,q,-1)+\frac{J_{1,2}^2}{2j(-a;q)},\notag
\end{align}
by (\ref{equation:alt-mdef-eq}), (\ref{equation:m-fnq-flip}), and (\ref{equation:m-fnq-zflip}).  Changing $a$ to $x$ gives the first equation in (\ref{equation:RLNid2}).  For the second equation use (\ref{equation:m-change-z}).  For the third equation  we recall Entry $12.2.1$ \cite {ABI}, \cite[p. 1] {RLN}:
\begin{equation}
\frac{1}{\overline{J}_{1,4}}\sum_{n=-\infty}^{\infty}\frac{q^{n(n+1)}}{1+q^{2n}a}=\sum_{n=0}^{\infty}\frac{(-1)^nq^{n^2}(q;q^2)_n}{(-a;q^2)_{n+1}(-q^2/a;q^2)_n}.
\end{equation}
Replacing $a$ by $x$ gives
\begin{align*}
\sum_{n=0}^{\infty}&\frac{(-1)^nq^{n^2}(q;q^2)_n}{(-x;q^2)_{n+1}(-q^2/x;q^2)_n}=\frac{1}{\overline{J}_{1,4}}\sum_{n=-\infty}^{\infty} \frac{q^{n(2n+1)}}{1+q^{2n}x}\\
&=\frac{1}{\overline{J}_{1,4}}\sum_{n=-\infty}^{\infty} \Big (\frac{q^{n(2n+1)}}{1-q^{4n}x^2} -x\cdot\frac{q^{n(2n+3)}}{1-q^{4n}x^2} \Big )\\
&=m(-qx^2,q^4,-q^{-1})-q^{-1}xm(-q^{-1}x^2,q^4,-q).&({\text{by } (\ref{equation:alt-mdef-eq})})
\end{align*}

We prove (\ref{equation:RLNid3}).  Entry $12.3.3$ \cite{ABI}, \cite[p. 4]{RLN}, states
\begin{equation*}
\sum_{n=0}^{\infty}\frac{(-1)^nq^{n^2}(q;q^2)_n}{(-q^2a,-q^2/a;q^2)_n}=\sum_{n=0}^{\infty}{}^*\frac{(-1)^n(q;q^2)_n}{(-aq,-q/a)_n}+\frac{1}{2}\frac{J_{1,2}(q;q^2)_{\infty}}{(-qa,-q/a)_{\infty}}.
\end{equation*}
Dividing by $1+a$ and replacing $a$ by $x$, and rewriting the last term, we obtain
\begin{equation*}
\sum_{n=0}^{\infty}\frac{(-1)^nq^{n^2}(q;q^2)_n}{(-x;q^2)_{n+1}(-q^2/x;q^2)_n}=\sum_{n=0}^{\infty}{}^*\frac{(-1)^n(q;q^2)_n}{(-x)_{n+1}(-q/x)_n}+\frac{1}{2}\frac{J_{1,2}^2}{j(-x;q)}.
\end{equation*}

We prove (\ref{equation:RLNid4}).  We recall a (slighty-rewritten) equation from page $8$ of \cite{RLN}, see \cite[Entry $12.3.2$]{ABI}:
\begin{equation*}
\sum_{n=0}^{\infty}\frac{(-1)^n(q;q^2)_nq^{n^2}}{(-a;q^2)_{n+1}(-q^2/a;q^2)_n}-\Big ( 1+\frac{1}{a}\Big )\sum_{n=0}^{\infty}\frac{(-1)^n(q;q^2)_nq^{(n+1)^2}}{(-aq,-q/a;q^2)_{n+1}}=\frac{J_{1,2}^2}{j(-a;q)}.
\end{equation*}
Replace $a$ with $x$, and the result follows from (\ref{equation:RLNid2})

We prove (\ref{equation:RLNid5}).  We recall a (slighty-rewritten) equation from page $5$ of \cite{RLN}, see \cite[Entry $12.4.3$]{ABI}:
\begin{equation*}
\sum_{n=0}^{\infty}\frac{(-1)^n(q^2;q^4)_nq^{2n^2}}{(-a;q^4)_{n+1}(-q^4/a;q^4)_n}+\Big ( 1+\frac{1}{a}\Big )\sum_{n=0}^{\infty}\frac{(-q)_{2n}q^{n+1}}{(aq,q/a;q^2)_{n+1}}=\frac{\overline{J}_{1,4}^2j(-aq^2;q^4)}{j(-a;q^4)j(aq;q^2)}.
\end{equation*}
Replace $a$ with $x$, and the result follows from (\ref{equation:RLNid1}).
\end{proof}


\section{Duals of mock theta functions}\label{section:mock}
We recall two of the fifth order mock theta functions as found in \cite[Section $4$]{HM}:
\begin{align}
\chi_0(q)&=\sum_{n= 0}^{\infty}\frac{q^n}{(q^{n+1})_n}=1+\sum_{n=0}^{\infty}\frac{q^{2n+1}}{(q^{n+1})_{n+1}}=2+3qg(q,q^5)-\frac{J_5^2J_{2,5}}{J_{1,5}^2}\label{equation:mock-chi0-5th}\\
&=2-2m(q^7,q^{15},q^{12})-m(q^7,q^{15},q^9)-2q^{-1}m(q^2,q^{15},q^{12})-q^{-1}m(q^2,q^{15},q^9)\notag\\
\chi_1(q)&=\sum_{n= 0}^{\infty}\frac{q^n}{(q^{n+1})_{n+1}}=1+\sum_{n= 0}^{\infty}\frac{q^{2n+1}(1+q^n)}{(q^{n+1})_{n+1}}=3qg(q^2,q^5)+\frac{J_5^2J_{1,5}}{J_{2,5}^2}\label{equation:mock-chi1-5th}\\
&=-2q^{-1}m(q^4,q^{15},q^{-6})-q^{-1}m(q^4,q^{15},q^3)-2q^{-2}m(q,q^{15},q^{6})-q^{-2}m(q,q^{15},q^{-3})\notag
\end{align}
For the fifth order mock theta function $\chi_0(q)$ we have a dual for each Eulerian form:
\begin{align}
\sum_{n= 0}^{\infty}&\frac{q^n}{(q^{n+1})_n}
\rightarrow \sum_{n= 0}^{\infty}\frac{(-1)^nq^{3n^2/2-n/2}}{(q^{n+1})_n}=2-\sum_{n=0}^{\infty}(-1)^nq^{-7n}q^{15\binom{n+1}{2}}\notag\\
&-q^7\sum_{n=0}^{\infty}(-1)^nq^{7n}q^{15\binom{n+1}{2}} -q\sum_{n=0}^{\infty}(-1)^nq^{-2n}q^{15\binom{n+1}{2}}-q^3\sum_{n=0}^{\infty}(-1)^nq^{2n}q^{15\binom{n+1}{2}}\label{equation:mock-chi0-5th-dualA}\\
1+&\sum_{n= 0}^{\infty}\frac{q^{2n+1}}{(q^{n+1})_{n+1}}
\rightarrow 1+\sum_{n= 0}^{\infty}\frac{(-1)^{n+1}q^{3n^2/2+n/2}}{(q^{n+1})_{n+1}}=1-\sum_{n=0}^{\infty}(-1)^nq^{-7n}q^{15\binom{n+1}{2}}\notag\\
&-q^7\sum_{n=0}^{\infty}(-1)^nq^{7n}q^{15\binom{n+1}{2}} -q\sum_{n=0}^{\infty}(-1)^nq^{-2n}q^{15\binom{n+1}{2}}-q^3\sum_{n=0}^{\infty}(-1)^nq^{2n}q^{15\binom{n+1}{2}}.\label{equation:mock-chi0-5th-dualB}
\end{align}
For the fifth order mock theta function $\chi_1(q)$ we have a dual for each Eulerian form:
{\allowdisplaybreaks {\begin{align}
&\sum_{n= 0}^{\infty}\frac{q^n}{(q^{n+1})_{n+1}}
\rightarrow \sum_{n= 0}^{\infty}\frac{(-1)^{n+1}q^{3\binom{n+1}{2}+1}}{(q^{n+1})_{n+1}}=-q\sum_{n=0}^{\infty}(-1)^nq^{-4n}q^{15\binom{n+1}{2}}\notag\\
&-q^5\sum_{n=0}^{\infty}(-1)^nq^{4n}q^{15\binom{n+1}{2}} -q^2\sum_{n=0}^{\infty}(-1)^nq^{-n}q^{15\binom{n+1}{2}}-q^3\sum_{n=0}^{\infty}(-1)^nq^{n}q^{15\binom{n+1}{2}}\label{equation:mock-chi1-5th-dualA}\\
&1+\sum_{n= 0}^{\infty}\frac{q^{2n+1}(1+q^n)}{(q^{n+1})_{n+1}}
\rightarrow \notag\\
&1+\sum_{n= 0}^{\infty}\frac{(-1)^{n+1}q^{3n^2/2-n/2}(1+q^n)}{(q^{n+1})_{n+1}}=-1-q\sum_{n=0}^{\infty}(-1)^nq^{-4n}q^{15\binom{n+1}{2}}\label{equation:mock-chi1-5th-dualB} \\
& \ \ \ \ \ -q^5\sum_{n=0}^{\infty}(-1)^nq^{4n}q^{15\binom{n+1}{2}}
-q^2\sum_{n=0}^{\infty}(-1)^nq^{-n}q^{15\binom{n+1}{2}}-q^3\sum_{n=0}^{\infty}(-1)^nq^{n}q^{15\binom{n+1}{2}}.\notag
\end{align}}%

\begin{theorem}\label{theorem:fifth-duals}  Identities (\ref{equation:mock-chi0-5th-dualA})-(\ref{equation:mock-chi1-5th-dualA}) are true.
\end{theorem}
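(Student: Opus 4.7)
The plan is to specialise Lemma~\ref{lemma:important-lemma} to $a=q$, which reduces it to
\begin{equation*}
\sum_{n=0}^{\infty}(-1)^nq^{\binom{n+1}{2}}(q)_n\beta_n = (1-q)\sum_{n=0}^{\infty}(-1)^nq^{\binom{n+1}{2}}\alpha_n,
\end{equation*}
and to apply this to three Bailey pairs $(\alpha_n,\beta_n)$ relative to $(q,q)$ drawn from the tables of Slater~\cite{S} and Watson~\cite{Wa1}. As emphasised in the paper, the conjugate Bailey pair encoded in Lemma~\ref{lemma:important-lemma} is common to all three identities; only the Bailey pair itself varies.

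I would first dispose of the initial arrow in each of (\ref{equation:mock-chi0-5th-dualA})--(\ref{equation:mock-chi1-5th-dualA}), which merely records the substitution $q\to q^{-1}$ followed by $(a;\rho)_n=(a^{-1};q)_n(-a)^n\rho^{\binom{n}{2}}$ from step (ii); this is a routine one-line calculation for each sum. The substantive content is the equality between the post-arrow Eulerian form and the explicit partial theta sum on the right.

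For each identity I recast the LHS in the shape $\sum(-1)^nq^{\binom{n+1}{2}}(q)_n\beta_n$. Using $(q^{n+1})_n=(q)_{2n}/(q)_n$ and $3n^2/2-n/2=\binom{n+1}{2}+n(n-1)$, the LHS of (\ref{equation:mock-chi0-5th-dualA}) has this form with $\beta_n=q^{n(n-1)}/(q)_{2n}$; analogous short manipulations handle (\ref{equation:mock-chi0-5th-dualB}) and (\ref{equation:mock-chi1-5th-dualA}). Each such $\beta_n$ matches the $\beta$-side of a Bailey pair of Slater/Watson associated with a fifth order identity, and the corresponding $\alpha_n$ is sparse, supported on a few residue classes modulo a divisor of $30$. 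Applying the specialised form of Lemma~\ref{lemma:important-lemma} then converts the LHS into $(1-q)\sum(-1)^nq^{\binom{n+1}{2}}\alpha_n$, which splits according to the residue-class support of $\alpha_n$ into four partial theta sub-sums of shape $\sum_{n\ge 0}(-1)^nq^{cn}q^{15\binom{n+1}{2}}$, matching the weights $c\in\{\pm 7,\pm 2\}$ for $\chi_0$ and $c\in\{\pm 4,\pm 1\}$ for $\chi_1$ on the right-hand sides. The leading constants ($2$ in (\ref{equation:mock-chi0-5th-dualA}), $1$ in (\ref{equation:mock-chi0-5th-dualB})) emerge after separating off the $\alpha_0$ contribution and clearing the prefactor $1-q$.

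The main obstacle is locating the correct Bailey pair in Slater's and Watson's lengthy tables for each identity and verifying that the support of $\alpha_n$ produces precisely the four prescribed linear weights on $q^{15\binom{n+1}{2}}$ with the correct signs. The heuristic (iii) from the introduction predicts in advance which four partial thetas to expect, so once the Bailey pair is pinned down the verification becomes a bookkeeping exercise---collecting residue classes and tracking the $n=0$ boundary terms---rather than a genuine analytic difficulty.
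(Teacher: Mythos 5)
Your proposal is correct and follows essentially the same route as the paper: specialise Lemma \ref{lemma:important-lemma} to $a=q$ (so the right side collapses to $(1-q)\sum_{n\ge 0}(-1)^nq^{\binom{n+1}{2}}\alpha_n$), rewrite each post-arrow Eulerian form as $\sum(-1)^nq^{\binom{n+1}{2}}(q)_n\beta_n$, and feed in the Bailey pairs relative to $(q,q)$ --- namely Warnaar's pair with $\beta_n=q^{n(n-1)}/(q)_{2n}$ for (\ref{equation:mock-chi0-5th-dualA}) and Slater's A6 and A8 for (\ref{equation:mock-chi0-5th-dualB}) and (\ref{equation:mock-chi1-5th-dualA}) --- whose sparse $\alpha_n$ yield exactly the four partial theta sums with weights $\pm 7,\pm 2$ and $\pm 4,\pm 1$ on $q^{15\binom{n+1}{2}}$. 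The only slip is attribution: \cite{Wa1} is Warnaar, not Watson.
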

\begin{proof}[Proof of Theorem \ref{theorem:fifth-duals}]  For all of the identities we will employ Lemma \ref{lemma:important-lemma}.  For identity (\ref{equation:mock-chi0-5th-dualA}) we use the Bailey pair \cite[p. $12$]{Wa1}:
\begin{align*}
\alpha_n&=(-1)^{\lfloor (4n+1)/3 \rfloor}q^{(n-2)n/3}\frac{1-q^{2n+1}}{1-q}\chi(n\not\equiv 1 \pmod 3),\ \ \ \beta_n=\frac{q^{n(n-1)}}{(q;q)_{2n}},
\end{align*}
where $\chi(\text{true})=1$ and $\chi(\text{false})=0$.  For identity (\ref{equation:mock-chi0-5th-dualB}) we use the Bailey pair \cite[A6]{S}:
\begin{align*}
\alpha_{3n-1}=q^{3n^2+n}, \ \alpha_{3n}&=q^{3n^2-n}, \ \alpha_{3n+1}=-q^{3n^2+n}-q^{3n^2+5n+2},
\ \ \ \beta_n=\frac{q^{n^2}}{(q^2;q)_{2n}}.
\end{align*}
 For identity (\ref{equation:mock-chi1-5th-dualA}) we use the Bailey pair \cite[A8]{S}:
\begin{equation*}
\alpha_{3n-1}=q^{3n^2-2n}, \ \alpha_{3n}=q^{3n^2+2n}, \ \alpha_{3n+1}=-q^{3n^2+4n+1}-q^{3n^2+2n},\ \ \ 
\beta_n=\frac{q^{n^2+n}}{(q^2;q)_{2n}}.\qedhere
\end{equation*}
\end{proof}
We recall the three seventh order mock theta functions as found in \cite[Section $4$]{HM}:
 \begin{align}
{\mathcal{F}}_0(q)&=\sum_{n= 0}^{\infty}\frac{q^{n^2}}{(q^{n+1})_{n}}=2+2qg(q,q^{7})-\frac{J_{3,7}^2}{J_1}\label{equation:mock-F0-7th}\\
&=m(q^{10},q^{21},q^9)+m(q^{10},q^{21},q^{-9})-q^{-1}m(q^{4},q^{21},q^9)-q^{-1}m(q^{4},q^{21},q^{-9}) \notag\\
{\mathcal{F}}_1(q)&=\sum_{n= 1}^{\infty}\frac{q^{n^2}}{(q^{n})_{n}}=2q^2g(q^2,q^{7})+\frac{qJ_{1,7}^2}{J_1}\label{equation:mock-F1-7th}\\
&=-m(q^{8},q^{21},q^3)-m(q^{8},q^{21},q^{-3})-q^{-2}m(q,q^{21},q^3)-q^{-2}m(q,q^{21},q^{-3}) \notag\\
{\mathcal{F}}_2(q)&=\sum_{n= 0}^{\infty}\frac{q^{n(n+1)}}{(q^{n+1})_{n+1}}=2q^2g(q^3,q^{7})+\frac{J_{2,7}^2}{J_1}\label{mock-F2-7th}\\
&=-q^{-1}m(q^{5},q^{21},q^6)-q^{-1}m(q^{5},q^{21},q^{-6})-q^{-2}m(q^2,q^{21},q^6)-q^{-2}m(q^2,q^{21},q^{-6}) \notag
\end{align}
The substitution $q\rightarrow q^{-1}$,  the heuristic (iii), and identity (\ref{equation:m-fnq-flip}), lead us to the duals:
{\allowdisplaybreaks \begin{align}
{\mathcal{F}}_0(q)&\rightarrow \sum_{n= 0}^{\infty}\frac{(-1)^nq^{\binom{n+1}{2}}}{(q^{n+1})_{n}}=\sum_{n=0}^{\infty}(-1)^nq^{-10n}q^{21\binom{n+1}{2}}+q^{10}\sum_{n=0}^{\infty}(-1)^nq^{10n}q^{21\binom{n+1}{2}}\notag\\
&\ \ \ \ \ -q\sum_{n=0}^{\infty}(-1)^nq^{-4n}q^{21\binom{n+1}{2}}-q^5\sum_{n=0}^{\infty}(-1)^nq^{4n}q^{21\binom{n+1}{2}},\label{equation:mock-F0-7th-dual}\\
{\mathcal{F}}_1(q)&\rightarrow \sum_{n= 0}^{\infty}\frac{(-1)^{n+1}q^{\binom{n+1}{2}}}{(q^{n+1})_{n+1}}=-\sum_{n=0}^{\infty}(-1)^nq^{-8n}q^{21\binom{n+1}{2}}-q^{8}\sum_{n=0}^{\infty}(-1)^nq^{8n}q^{21\binom{n+1}{2}}\notag\\
&\ \ \ \ \ -q^2\sum_{n=0}^{\infty}(-1)^nq^{-n}q^{21\binom{n+1}{2}}-q^3\sum_{n=0}^{\infty}(-1)^nq^{n}q^{21\binom{n+1}{2}},\label{equation:mock-F1-7th-dual}\\
{\mathcal{F}}_2(q)&\rightarrow \sum_{n= 0}^{\infty}\frac{(-1)^{n+1}q^{\binom{n+2}{2}}}{(q^{n+1})_{n+1}}=-q\sum_{n=0}^{\infty}(-1)^nq^{-5n}q^{21\binom{n+1}{2}}-q^{6}\sum_{n=0}^{\infty}(-1)^nq^{5n}q^{21\binom{n+1}{2}}\notag\\
&\ \ \ \ \ -q^2\sum_{n=0}^{\infty}(-1)^nq^{-2n}q^{21\binom{n+1}{2}}-q^4\sum_{n=0}^{\infty}(-1)^nq^{2n}q^{21\binom{n+1}{2}}.\label{equation:mock-F2-7th-dual}
\end{align}}

\begin{theorem}\label{theorem:seventh-duals}  Identities (\ref{equation:mock-F0-7th-dual})-(\ref{equation:mock-F2-7th-dual}) are true.
\end{theorem}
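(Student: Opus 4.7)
The plan is to mimic the proof of Theorem \ref{theorem:fifth-duals} verbatim: invoke Lemma \ref{lemma:important-lemma} three times, once for each of (\ref{equation:mock-F0-7th-dual})--(\ref{equation:mock-F2-7th-dual}), applied to an appropriate Bailey pair from \cite{S} or \cite{Wa1}. The conjugate Bailey pair inside Lemma \ref{lemma:important-lemma} is fixed once and for all; only the ordinary Bailey pair $(\alpha_n,\beta_n)$ changes from identity to identity.

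First I would rewrite each of the three left-hand sides to expose a Bailey-style summand. Since $1/(q^{n+1};q)_n=(q;q)_n/(q;q)_{2n}$ and $1/(q^{n+1};q)_{n+1}=(q;q)_n/(q;q)_{2n+1}$, the summands of the left sides of (\ref{equation:mock-F0-7th-dual}), (\ref{equation:mock-F1-7th-dual}), (\ref{equation:mock-F2-7th-dual}) become, respectively,
\begin{equation*}
(-1)^n q^{\binom{n+1}{2}}\,(q;q)_n\cdot\tfrac{1}{(q;q)_{2n}},\quad
-(-1)^n q^{\binom{n+1}{2}}\,(q;q)_n\cdot\tfrac{1}{(q;q)_{2n+1}},\quad
-(-1)^n q^{\binom{n+2}{2}}\,(q;q)_n\cdot\tfrac{1}{(q;q)_{2n+1}},
\end{equation*}
each of which fits the pattern $(-1)^n q^{\binom{n+1}{2}}(a;q)_n\beta_n$ on the left of (\ref{equation:important-id}) with the choice $a=q$ (or a tiny shift accounting for the $q^{\binom{n+2}{2}}=q^{n+1}q^{\binom{n+1}{2}}$ in the third case). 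Locating Bailey pairs in Slater's tables with these specific $\beta_n$ is standard; these are precisely the seventh-order Bailey pairs whose $\alpha_n$ is supported on residue classes modulo $7$ and produces the $J_{k,7}^2/J_1$-type terms visible in (\ref{equation:mock-F0-7th})--(\ref{mock-F2-7th}).

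Next I would feed the $\alpha_n$ of each pair into the right-hand side of (\ref{equation:important-id}). For the seventh-order Bailey pairs under consideration $\alpha_n$ splits into four subseries indexed by arithmetic progressions $n\equiv 0,\pm 1,\pm 2\pmod 7$ (or similar), each contributing a geometric factor $q^{\pm c n}$ against the quadratic weight $q^{21\binom{n+1}{2}}$ coming from combining $q^{\binom{n+1}{2}}$ with the seventh-order exponents. After the ratio $(q)_\infty/(aq)_\infty$ and the $(1-q^{2n+1})/(1-q)$ factor inside the typical Slater-$\alpha_n$ are absorbed (they telescope against the $(a;q)_n/(q;q)_n$ factor in (\ref{equation:important-id})), the four subseries are exactly the four partial theta functions displayed on the right of each of (\ref{equation:mock-F0-7th-dual})--(\ref{equation:mock-F2-7th-dual}), with the claimed prefactors $q^{10},q,q^5$ for ${\mathcal F}_0$, etc.

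The main obstacle will be bookkeeping of the prefactors and of the constant terms: one must verify that the $(q)_\infty/(aq)_\infty$ and $1/(1-q)$-type factors cancel cleanly, and that the particular sign and power of $q$ in front of each of the four partial theta functions matches the identity as written. I expect no genuine difficulty here, because the same bookkeeping worked in the proof of Theorem \ref{theorem:fifth-duals} for three Bailey pairs of the very same shape; the only additional step is identifying the three seventh-order Bailey pairs in \cite{S, Wa1} whose $\beta_n$ match the list above.
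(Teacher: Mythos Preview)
Your proposal is correct and essentially identical to the paper's proof: apply Lemma \ref{lemma:important-lemma} with $a=q$ to the three Bailey pairs whose $\beta_n$ are $1/(q;q)_{2n}$, $1/(q^2;q)_{2n}$, and $q^n/(q^2;q)_{2n}$, namely Warnaar's pair \cite[(4.6)]{Wa1} and Slater's pairs A2 and A4 from \cite{S}. One small correction to your description: the $\alpha_n$ for these pairs split according to $n\pmod 3$, not $n\pmod 7$ (one residue class carries two terms, or is killed by a $\chi$-factor while another carries a $(1-q^{2n+1})$); the four partial theta functions on the right arise from that mod-$3$ decomposition once the quadratic weight $q^{\binom{n+1}{2}}$ is combined with the $\alpha_n$-exponents.
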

\begin{proof}[Proof of Theorem \ref{theorem:seventh-duals}]  For all of the identities we will employ Lemma \ref{lemma:important-lemma}.  For identity (\ref{equation:mock-F0-7th-dual}) we use the Bailey pair \cite[$(4.6)$]{Wa1}:
\begin{align*}
\alpha_n&=(-1)^{\lfloor (4n+1)/3 \rfloor}q^{(2n-1)n/3}\frac{1-q^{2n+1}}{1-q}\chi(n\not\equiv 1 \pmod 3),\ \ \ 
\beta_n=\frac{1}{(q;q)_{2n}},
\end{align*}
where $\chi(\text{true})=1$ and $\chi(\text{false})=0$.  For identity (\ref{equation:mock-F1-7th-dual}) we use the Bailey pair \cite[A2]{S}:
\begin{align*}
\alpha_{3n-1}=q^{6n^2-n}, \ \alpha_{3n}&=q^{6n^2+n}, \ \alpha_{3n+1}=-q^{6n^2+5n+1}-q^{6n^2+7n+2},\ \ \ 
\beta_n=\frac{1}{(q^2;q)_{2n}}.
\end{align*}
 For identity (\ref{equation:mock-F2-7th-dual}) we use the Bailey pair   \cite[A4]{S}:
\begin{align*}
\alpha_{3n-1}=q^{6n^2-4n}, \ \alpha_{3n}&=q^{6n^2+4n}, \ \alpha_{3n+1}=-q^{6n^2+8n+2}-q^{6n^2+4n},\ \ \ 
\beta_n=\frac{q^{n}}{(q^2;q)_{2n}}.\qedhere
\end{align*}
\end{proof}

We recall the four tenth order mock theta functions as found in  \cite[Section $4$]{HM}:
{\allowdisplaybreaks \begin{align}
{\phi}(q)&=\sum_{n= 0}^{\infty}\frac{q^{\binom{n+1}{2}}}{(q;q^2)_{n+1}}=-q^{-1}m(q,q^{10},q)-q^{-1}m(q,q^{10},q^{2})\label{equation:mock-phi-10th}\\
{\psi}(q)&=\sum_{n= 0}^{\infty}\frac{q^{\binom{n+2}{2}}}{(q;q^2)_{n+1}}=-m(q^3,q^{10},q)-m(q^3,q^{10},q^{3})\label{equation:mock-psi-10th}\\
{X}(q)&=\sum_{n= 0}^{\infty}\frac{(-1)^nq^{n^2}}{(-q)_{2n}}
=m(-q^2,q^{5},q)+m(-q^2,q^{5},q^{4})\label{equation:mock-X-10th}\\
{\chi}(q)&=\sum_{n=0}^{\infty}\frac{(-1)^nq^{(n+1)^2}}{(-q)_{2n+1}}
=m(-q,q^{5},q^2)+m(-q,q^{5},q^{3})\label{equation:mock-chi-10th}
\end{align}}
The substitution $q\rightarrow q^{-1}$,  the heuristic (iii), and identity (\ref{equation:m-fnq-flip}), lead us to the duals:
\begin{align}
\phi(q)&\rightarrow \sum_{n= 0}^{\infty}\frac{(-1)^{n+1}q^{\binom{n+2}{2}}}{(q;q^2)_{n+1}}=-q\sum_{n=0}^{\infty}(-1)^nq^{-n}q^{5n(n+1)}-q^{2}\sum_{n=0}^{\infty}(-1)^nq^{n}q^{5n(n+1)},\label{equation:mock-phi-10th-dual}\\
\psi(q)&\rightarrow \sum_{n= 0}^{\infty}\frac{(-1)^{n+1}q^{\binom{n+1}{2}}}{(q;q^2)_{n+1}}=-\sum_{n=0}^{\infty}(-1)^nq^{-3n}q^{5n(n+1)}-q^{3}\sum_{n=0}^{\infty}(-1)^nq^{3n}q^{5n(n+1)},\label{equation:mock-psi-10th-dual}\\
X(q)&\rightarrow \sum_{n= 0}^{\infty}\frac{(-1)^nq^{n(n+1)}}{(-q)_{2n}}=\sum_{n=0}^{\infty}q^{-2n}q^{5n(n+1)/2}-q^2\sum_{n=0}^{\infty}q^{2n}q^{5n(n+1)/2},\label{equation:mock-X-10th-dual}\\
\chi(q)&\rightarrow \sum_{n= 0}^{\infty}\frac{(-1)^{n}q^{n(n+1)}}{(-q)_{2n+1}}=\sum_{n=0}^{\infty}q^{-n}q^{5n(n+1)/2}-q\sum_{n=0}^{\infty}q^{n}q^{5n(n+1)/2}.\label{equation:mock-chi-10th-dual}
\end{align}

\begin{theorem}\label{theorem:tenth-duals}  Identities (\ref{equation:mock-phi-10th-dual})-(\ref{equation:mock-chi-10th-dual}) are true.
\end{theorem}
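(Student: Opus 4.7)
The plan is to mimic the proofs of Theorems \ref{theorem:fifth-duals} and \ref{theorem:seventh-duals}. For each of the four identities I would first rewrite the transformed Eulerian form on the left as
$\sum_{n\ge 0}(-1)^n q^{\binom{n+1}{2}}(a)_n\beta_n$
for a suitable Bailey pair $(\alpha_n,\beta_n)$ relative to $(a,q)$, and then apply Lemma \ref{lemma:important-lemma}. The resulting $\alpha$-side, multiplied by the prefactor $(q)_\infty/(aq)_\infty$, is then to be recognised as the corresponding pair of partial theta functions on the right of (\ref{equation:mock-phi-10th-dual})-(\ref{equation:mock-chi-10th-dual}) after splitting the sum according to the residue class of $n$ modulo $5$.

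For $\phi(q)$ and $\psi(q)$ the transformed Eulerian denominator is $(q;q^2)_{n+1}$, while for $X(q)$ and $\chi(q)$ it is $(-q)_{2n}$ or $(-q)_{2n+1}$. I expect the needed Bailey pairs to come from Slater \cite{S} (and Watson \cite{Wa1}) and to be the tenth-order analogues of the fifth-order pairs A6 and A8 already invoked in Theorem \ref{theorem:fifth-duals}; their $\alpha_n$ are supported on $n\equiv 0,\pm 1\pmod 5$ with the expected alternating sign pattern, which is exactly what is needed to produce the two (respectively four) partial theta series on the right of the dual identities. For the $X$ and $\chi$ cases I would additionally rewrite $(-q)_{2n}=(q^2;q^2)_{2n}/(q)_{2n}$ so that the $\beta$-side is in the standard Bailey-pair format, and similarly for the odd-indexed variant.

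The main obstacle will be pinning down each Bailey pair together with the correct power of $q$ so that the $\beta$-side matches the transformed Eulerian form exactly; heuristic (iii) of the introduction tells us which residue-class structure to look for, but matching the constant term and the overall $q$-power is delicate. Once the pair is fixed, what remains is a routine collection of $q$-exponents on the $\alpha$-side and an application of the Jacobi triple product to identify $(q)_\infty/(aq)_\infty$ with the ratio of theta functions used to absorb $(a)_n/(q)_n$. Comparing residue classes modulo $5$ on the $\alpha$-side then yields exactly the right-hand sides of (\ref{equation:mock-phi-10th-dual})-(\ref{equation:mock-chi-10th-dual}).
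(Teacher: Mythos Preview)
Your overall plan---apply Lemma~\ref{lemma:important-lemma} to Bailey pairs drawn from Slater \cite{S} and Warnaar \cite{Wa1}---is exactly what the paper does. But your expectations about the \emph{shape} of those pairs are off in a way that would send you hunting in the wrong place. The $\alpha_n$ you need are \emph{not} supported on residues modulo~$5$; for $\phi$ and $\psi$ the paper uses Slater's pairs C4 and C3, whose $\alpha_n$ split according to the parity of $n$ (e.g.\ $\alpha_{2n}=(-1)^nq^{3n^2+n}$, $\alpha_{2n+1}=(-1)^{n+1}q^{3n^2+5n+2}$ for C3), and this mod-$2$ splitting is what produces the two partial theta sums on the right---note that all four right-hand sides have exactly two partial thetas, not ``two (respectively four)''. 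The $\beta_n$ for C3, C4 involve $(q^3;q^2)_n(q)_n$, and the pairs are relative to $a=q^2$, so the prefactor in Lemma~\ref{lemma:important-lemma} is handled directly without any triple-product gymnastics.

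For $X$ and $\chi$ your proposed factorisation $(-q)_{2n}=(q^2;q^2)_{2n}/(q)_{2n}$ does not lead to a standard Bailey $\beta_n$. The paper instead uses pairs whose $\beta_n$ contain $(-q^{1/2};q)_n$ or $(-q^{3/2};q)_n$ (Warnaar's $(4.4)$ and Slater's G2, both relative to $a=q$); after applying Lemma~\ref{lemma:important-lemma} one has $(q)_n\beta_n=1/(-q^{1/2};q^{1/2})_{2n}$ (or the shifted analogue), and a final substitution $q\to q^2$ turns this into $1/(-q;q)_{2n}$ as required. So the mechanism is a half-integer-exponent Bailey pair followed by $q\mapsto q^2$, not a product factorisation.
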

\noindent The following is then easy to show:
\begin{corollary} Let $\omega$ be a primitive third root of unity and denote the duals of the tenth order mock theta functions by $\phi_D(q)$, $\psi_D(q)$, $X_D(q)$, and $\chi_D(q)$ respectively.  Then
\begin{align}
q^{-2/3}\phi_D(q^3)-\frac{\psi_D(\omega^2 q^{1/3})-\psi_D(\omega q^{1/3})}{\omega - \omega^2}&=0,\label{equation:tenth-dual-I}\\
q^{2/3}\psi_D(q^3)+\frac{\omega \phi_D(\omega^2 q^{1/3})-\omega^2\phi_D(\omega q^{1/3})}{\omega - \omega^2}&=0,\label{equation:tenth-dual-II}\\
X_D(q^3)-\frac{\omega \chi_D(\omega^2 q^{1/3})-\omega^2\chi_D(\omega q^{1/3})}{\omega - \omega^2}&=0,\label{equation:tenth-dual-III}\\
\chi_D(q^3)+q^{-2/3}\frac{ X_D(\omega^2 q^{1/3})-X_D(\omega q^{1/3})}{\omega - \omega^2}&=0.\label{equation:tenth-dual-IV}
\end{align}
\end{corollary}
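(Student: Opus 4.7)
The plan is to apply a 3-dissection argument using the explicit partial theta representations of the four duals given by the right-hand sides of (\ref{equation:mock-phi-10th-dual})--(\ref{equation:mock-chi-10th-dual}). For any formal series $f(q)=\sum_n c_n q^n$ with integer exponents, write its $3$-dissection
$$f(q)=f_{(0)}(q^3)+q\,f_{(1)}(q^3)+q^2 f_{(2)}(q^3),$$
collecting the terms whose exponent lies in each residue class modulo $3$.  A direct calculation with $\omega^3=1$ then yields
\begin{align*}
\frac{f(\omega^2 q^{1/3})-f(\omega q^{1/3})}{\omega-\omega^2}&=-q^{1/3}f_{(1)}(q)+q^{2/3}f_{(2)}(q),\\
\frac{\omega f(\omega^2 q^{1/3})-\omega^2 f(\omega q^{1/3})}{\omega-\omega^2}&=f_{(0)}(q)-q^{2/3}f_{(2)}(q).
\end{align*}

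Substituting these two identities into the left-hand sides of (\ref{equation:tenth-dual-I})--(\ref{equation:tenth-dual-IV}) and separating the three cosets of $\Z$ in $\tfrac{1}{3}\Z$, each of the four identities decouples into a pair of elementary assertions:
(I) $\psi_{D,(2)}\equiv 0$ and $\phi_D(q^3)=-q\,\psi_{D,(1)}(q)$;
(II) $\phi_{D,(0)}\equiv 0$ and $\psi_D(q^3)=\phi_{D,(2)}(q)$;
(III) $\chi_{D,(2)}\equiv 0$ and $X_D(q^3)=\chi_{D,(0)}(q)$;
(IV) $X_{D,(1)}\equiv 0$ and $\chi_D(q^3)=-X_{D,(2)}(q)$.

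I would verify the eight resulting assertions directly from the partial theta representations.  The exponents appearing in the four duals are the quadratic polynomials $5n^2+4n+1,\ 5n^2+6n+2$ for $\phi_D$; $5n^2+2n,\ 5n^2+8n+3$ for $\psi_D$; $(5n^2+n)/2,\ (5n^2+9n+4)/2$ for $X_D$; and $(5n^2+3n)/2,\ (5n^2+7n+2)/2$ for $\chi_D$.  Computing their residues modulo $3$ as $n$ runs over the three classes shows that for each dual exactly one residue class is never attained, which establishes the four vanishing statements.  For the four matching equalities, one restricts to the surviving subseries (for instance $n=3m+1$ in $\psi_D$ to obtain $\psi_{D,(1)}$, or $n=3m$ in the first sum of $\chi_D$ together with $n=3m+2$ in the second sum to obtain $\chi_{D,(0)}$) and compares with the target dual evaluated at $q^3$; after rescaling exponents by a factor of $3$ and reindexing, the two partial theta sums agree term by term.

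The entire argument is mechanical once the 3-dissection is set up: there is no analytic or structural subtlety, only the bookkeeping of exponents, signs, and powers of $q$ across the four identities.  This tracking is the only real obstacle, and it is routine.
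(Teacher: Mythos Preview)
Your approach is correct and is exactly what the paper intends: the paper gives no proof beyond the remark ``The following is then easy to show,'' meaning the corollary is meant to follow by direct computation from the explicit partial theta representations (\ref{equation:mock-phi-10th-dual})--(\ref{equation:mock-chi-10th-dual}), and your $3$-dissection carries this out precisely. The decoupling into eight elementary assertions and the residue-class bookkeeping on the quadratic exponents is the natural way to make ``easy to show'' explicit.
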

\noindent For comparison, we recall the four identities for the tenth order mock theta functions \cite{CH1,CH2,Z10}.  Again, $\omega$ is a primitive third root of unity:
{\allowdisplaybreaks \begin{align*}
q^{2/3}\phi(q^3)-\frac{\psi(\omega q^{1/3})-\psi(\omega^2 q^{1/3})}{\omega - \omega^2}&=-q^{1/3}\frac{\sum_{n\in \mathbb{Z}} (-1)^nq^{n^2/3}}{\sum_{n\in \mathbb{Z}} (-1)^nq^{n^2}}\frac{\sum_{n\in \mathbb{Z}} (-1)^nq^{5n^2/2+3n/2}}{(q;q^2)_{\infty}},\\
q^{-2/3}\psi(q^3)+\frac{\omega \phi(\omega q^{1/3})-\omega^2\phi(\omega^2 q^{1/3})}{\omega - \omega^2}&=\frac{\sum_{n\in \mathbb{Z}} (-1)^nq^{n^2/3}}{\sum_{n\in \mathbb{Z}} (-1)^nq^{n^2}}\frac{\sum_{n\in \mathbb{Z}} (-1)^nq^{5n^2/2+n/2}}{(q;q^2)_{\infty}},\\
X(q^3)-\frac{\omega \chi(\omega q^{1/3})-\omega^2\chi(\omega^2 q^{1/3})}{\omega - \omega^2}&=\frac{\sum_{n\in \mathbb{Z}} (-1)^nq^{n(n+1)/6}}{\sum_{n\in \mathbb{Z}} (-1)^nq^{n(n+1)/2}}\frac{\sum_{n\in \mathbb{Z}} (-1)^nq^{5n^2+n}}{(-q;q)_{\infty}},\\
\chi(q^3)+q^{2/3}\frac{ X(\omega q^{1/3})-X(\omega^2 q^{1/3})}{\omega - \omega^2}&=-\frac{\sum_{n\in \mathbb{Z}} (-1)^nq^{n(n+1)/6}}{\sum_{n\in \mathbb{Z}} (-1)^nq^{n(n+1)/2}}\frac{\sum_{n\in \mathbb{Z}} (-1)^nq^{5n^2+3n}}{(-q;q)_{\infty}}.
\end{align*}}%

\begin{proof}[Proof of Theorem \ref{theorem:tenth-duals}]  For all of the identities we will employ Lemma \ref{lemma:important-lemma}.   For identity (\ref{equation:mock-phi-10th-dual}) we use the Bailey pair \cite[C4]{S}:
\begin{align*}
\alpha_{2n}&=(-1)^nq^{3n^2+3n}, \ \alpha_{2n+1}=(-1)^{n+1}q^{3n^2+3n}, \ \ \ \beta_n=\frac{q^n}{(q^3;q^2)_n(q;q)_{n}}.
\end{align*}
For identity (\ref{equation:mock-psi-10th-dual}) we use the Bailey pair \cite[C3]{S}:
\begin{align*}
\alpha_{2n}&=(-1)^nq^{3n^2+n}, \ \alpha_{2n+1}=(-1)^{n+1}q^{3n^2+5n+2}, \ \ \ 
\beta_n=\frac{1}{(q^3;q^2)_n(q;q)_{n}}.
\end{align*}
For identity (\ref{equation:mock-X-10th-dual}) we use the Bailey pair \cite[$(4.4)$]{Wa1}:
\begin{align*}
\alpha_{n}&=\frac{(-1)^nq^{(3n-1)n/4}(1-q^{2n+1})}{(1-q)},\ \ \ \beta_n=\frac{1}{(q^2;q^2)_n(-q^{1/2};q)_{n}}.
\end{align*}
 For identity (\ref{equation:mock-chi-10th-dual}) we use the Bailey pair   \cite[G2]{S}:
\begin{align*}
\alpha_{2n}&=q^{3n^2+\tfrac{1}{2}n}\frac{(1-q^{2n+\tfrac{1}{2}})}{1-q^{\tfrac{1}{2}}},\ \alpha_{2n-1}=q^{3n^2-\tfrac{1}{2}n}\frac{(1-q^{-2n+\tfrac{1}{2}})}{1-q^{\tfrac{1}{2}}}, \ \ \ 
\beta_n=\frac{1}{(q^2;q^2)_n(-q^{3/2};q)_{n}}.\qedhere
\end{align*}
\end{proof}


\section{Duals and Duals of second type}\label{section:duals}

In this section we state the duals for many mixed partial theta functions found in \cite{RLN} and find the corresponding duals of second type.  The duals are not new, but what we do is to rewrite them in terms of Appell--Lerch sums.  With the Appell--Lerch sum form in mind, we look for the functional equations of the duals of second type by assuming a cancellation similar to that which occurs for the two sixth order mock theta functions $\phi(q)$ and $\phi \_(q)$ in identity \cite[Entry $3.4.1$]{ABII}, \cite[p. 6, 14]{RLN} and for the two sixth orders $\psi(q)$ and $\psi \_(q)$ in  identity \cite[Entry $3.4.2$]{ABII}, \cite[p. 14]{RLN}.

Our first mixed partial theta function (\ref{equation:ABII-6.3.2}) is well-known and was published by Andrews \cite{A1} immediately after his discovery of the lost notebook.  For the interested reader, we point out that (\ref{equation:ABII-6.3.2}) yields as special cases the two partial theta function identities found in Lawrence and Zagier's work on quantum invariants of $3$-manifolds \cite{LZ}.

\subsection{Entry $6.3.2$ \cite{ABII}, also \cite[p. 7]{RLN}}

For $a\ne 0$,
{\allowdisplaybreaks \begin{align}
\sum_{n=0}^{\infty}\frac{q^n}{(-aq,-q/a)_n}&=(1+a)\sum_{n=0}^{\infty}a^{3n}q^{n(3n+1)/2}(1-a^2q^{2n+1})\label{equation:ABII-6.3.2}\\
& \ \ \ \ \ \ \ \ \ \ -\frac{(1+a)J_1}{j(-a;q)}\sum_{n=0}^{\infty}(-1)^na^{2n+1}q^{n(n+1)/2}.\notag
\end{align}}%

The dual already exists.  We note from identity (\ref{equation:g-def}) and (\ref{equation:g-to-m}) that
\begin{align}
\sum_{n=0}^{\infty}\frac{q^{n^2}}{(-aq,-q/a)_n}&=(1+a)(1-ag(-a,q))\label{equation:6.3.2-dual}\\
&=(1+a)(1-m(-q^2a^{-3},q^3,a^2)+a^{-1}m(-qa^{-3},q^3,a^2)).\notag
\end{align}
We demonstrate how the heuristic (iii) takes us from the partial theta function on the right-hand side of (\ref{equation:ABII-6.3.2}) to the right-hand side of (\ref{equation:6.3.2-dual}):
\begin{align*}
\sum_{n=0}^{\infty}&(a^3q^{-1})^nq^{3\binom{n+1}{2}}-a^2q\sum_{n=0}^{\infty}(a^3q)^nq^{3\binom{n+1}{2}} \\
&\rightarrow \sum_{n=0}^{\infty}(a^3q)^nq^{-3\binom{n+1}{2}}-a^2q^{-1}\sum_{n=0}^{\infty}(a^3q^{-1})^nq^{-3\binom{n+1}{2}} &(q\rightarrow q^{-1})\\
& \ \ \sim m(-a^3q,q^3,*)-a^2q^{-1}m(-a^3q^{-1},q^3,*)&(\text{by }(iii))\\
& \ \ \sim  1+a^3q^{-2}m(-a^3q^{-2},q^3,*)-a^2q^{-1}m(-a^3q^{-1},q^3,*)&({\text{by }} (\ref{equation:m-fnq-x}))\\
& \ \ \sim  1-m(-q^2a^{-3},q^3,*)+a^{-1}m(-qa^{-3},q^3,*).&({\text{by }} (\ref{equation:m-fnq-flip}))
\end{align*}
We consider the sum
\begin{equation}
\sum_{n=-\infty}^{-1}\frac{q^{n^2}}{(-aq,-q/a)_n}.\label{equation:6.3.2-ABII-tail}
\end{equation}
Making the substution $n\rightarrow -n$, (\ref{equation:6.3.2-ABII-tail}) becomes
\begin{equation}
\sum_{n=1}^{\infty}q^{n}(-1/a,-a)_n=\Big ( 1+\frac{1}{a}\Big )\Big ( 1+a \Big )\sum_{n=0}^{\infty}q^{n+1}(-q/a,-aq)_n=:(1+a)f(a)\label{equation:6.3.2-ABII-tail-2}
\end{equation}
We find
\begin{equation}
f(qa)+1-qa^2-qa^3f(a)=(1-a^2q)\frac{j(-a;q)}{J_1}\label{equation:dog}
\end{equation}
We rewrite the functional equation (\ref{equation:dog}) as
\begin{equation}
f(a)=q^{-1}a^{-3}-a^{-1}-q^{-1}a^{-3}(1-a^2q)\frac{j(-a;q)}{J_1}+q^{-1}a^{-3}f(qa).\label{equation:cat}
\end{equation}
Iterating the functional equation (\ref{equation:cat}) and using the heuristic (iii) suggests the identity
\begin{equation*}
f(a)\sim-1+ag(-a,q)+\frac{j(-a;q)}{J_1}m(a^2,q,*).
\end{equation*}
Some numerical work suggests the dual of second type:
\begin{align}
\Big ( 1+\frac{1}{a}\Big )&\sum_{n=0}^{\infty}q^{n+1}(-q/a,-aq)_n=-1+ag(-a,q)+\frac{j(-a;q)}{J_1}m(a^2,q,-a^{-1}).\label{equation:6.3.2-ABII-2ndDualB}\\
&=-1+ag(-a,q)+\frac{j(-a;q)}{J_1}m(a^2,q,-1) +\frac{1}{2}\frac{j(a;q)^3j(qa^2;q^2)}{J_2^2j(a^4;q^2)}\label{equation:6.3.2-ABII-2ndDualA}
\end{align}
\begin{theorem} Identities (\ref{equation:6.3.2-ABII-2ndDualB}) and  (\ref{equation:6.3.2-ABII-2ndDualA}) are true
\end{theorem}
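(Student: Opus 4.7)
The right-hand sides of (\ref{equation:6.3.2-ABII-2ndDualB}) and (\ref{equation:6.3.2-ABII-2ndDualA}) differ only in their Appell--Lerch representation, so the first step is to verify their equality.  This is a direct application of the change-of-$z$ formula (\ref{equation:m-change-z}) with $x=a^{2}$, $z_{0}=-1$, $z_{1}=-a^{-1}$: after simplifying the resulting theta quotient using $j(a^{-1};q)=-a^{-1}j(a;q)$ and the standard two-dissection of $j(-a^{2};q)$, the difference between the two proposed right-hand sides collapses to the mixed term $\tfrac12\,j(a;q)^{3}j(qa^{2};q^{2})/(J_{2}^{2}\,j(a^{4};q^{2}))$.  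It therefore suffices to prove (\ref{equation:6.3.2-ABII-2ndDualB}).

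My plan for (\ref{equation:6.3.2-ABII-2ndDualB}) is to show that its right-hand side
\[
F(a):=-1+ag(-a,q)+\frac{j(-a;q)}{J_{1}}\,m(a^{2},q,-a^{-1})
\]
satisfies the same first-order $q$-difference equation (\ref{equation:cat}) as $f(a)$, and then to pin down the unique solution by matching a boundary condition.  The shift $a\mapsto qa$ in $F$ is handled by combining: (i) the theta quasi-periodicity $j(-qa;q)=-a^{-1}j(-a;q)$; (ii) two uses of (\ref{equation:m-fnq-x}) together with the $z$-periodicity (\ref{equation:m-fnq-z}), which reduce $m(q^{2}a^{2},q,-q^{-1}a^{-1})$ to $1-qa^{2}+qa^{4}m(a^{2},q,-a^{-1})$; and (iii) a shift relation for $g(-a,q)$ under $a\mapsto qa$, which is not recorded in the paper but can be derived either directly from (\ref{equation:g-def}) by isolating the lowest summand after a reindexing, or from the base-$q^{3}$ representation (\ref{equation:g-to-m}) by applying (\ref{equation:m-fnq-x}) to each of its four Appell--Lerch terms.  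The associated homogeneous equation $H(qa)=qa^{3}H(a)$ has a one-dimensional solution space of multiples of a specific theta product, and the $a\to 0$ behaviour of that product is incompatible with that of both $f(a)$ and $F(a)$ viewed as formal series in $q$; thus matching the constant term of the $q$-expansion (equivalently, the limit of $a\,F(a)$ as $a\to 0$ after clearing the pole) forces the homogeneous contribution to vanish.

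The main obstacle is step (iii): obtaining a shift relation for $g(-a,q)$ whose elementary and theta-function byproducts reproduce \emph{exactly} the constants $-1$, $+qa^{2}$, and the residual $(1-a^{2}q)\,j(-a;q)/J_{1}$ on the right of (\ref{equation:cat}).  Tracking signs and powers of $a$ and $q$ across all three shift formulas, and reconciling the explicit $-1$ in $F(a)$ with the $+1$ output by each use of (\ref{equation:m-fnq-x}) and with the constant term of the $q$-difference equation, is where essentially all of the bookkeeping of the proof lives.  A cleaner alternative that would bypass step (iii) is to locate a Bailey pair from \cite{S,Wa1} whose image under Lemma \ref{lemma:important-lemma} yields the series $(1+a^{-1})\sum_{n\ge 0}q^{n+1}(-q/a,-aq)_{n}$ on the left and produces the combination $-1+ag(-a,q)+\tfrac{j(-a;q)}{J_{1}}m(a^{2},q,-a^{-1})$ on the right, paralleling the Bailey-pair proofs in Section \ref{section:mock}; the mixed partial-theta structure of (\ref{equation:6.3.2-ABII-2ndDualB}) strongly suggests that such a pair exists after a Schur-type dissection of its $\alpha_{n}$.
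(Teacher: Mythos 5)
Your reduction of (\ref{equation:6.3.2-ABII-2ndDualA}) to (\ref{equation:6.3.2-ABII-2ndDualB}) via (\ref{equation:m-change-z}) is exactly what the paper does, but your plan for proving (\ref{equation:6.3.2-ABII-2ndDualB}) has a genuine gap, and it is located precisely where you say "essentially all of the bookkeeping of the proof lives." First, you take for granted that $f(a)$ satisfies (\ref{equation:cat}); in the paper that functional equation is only asserted as part of the heuristic that \emph{suggests} the identity, so your argument would also have to prove (\ref{equation:dog}) for the series $(1+a^{-1})\sum_{n\ge0}q^{n+1}(-q/a,-aq)_n$. Second, and more seriously, your uniqueness step is based on a false premise: the homogeneous equation $H(qa)=qa^{3}H(a)$ is \emph{not} solved by a one-dimensional space of multiples of a theta product. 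Under $a\mapsto qa$ a theta function $j(xa^{3};q^{3})$ (or $j(xa^{-3};q^{3})$) picks up a factor involving $a^{-3}$, never $+qa^{3}$; a Laurent-coefficient computation shows the only solution holomorphic on $\mathbb{C}^{*}$ is $H\equiv 0$, while meromorphic solutions (quotients with theta denominators) form an infinite-dimensional family. So "match one boundary coefficient to kill the theta multiple" does not close the argument: to use the functional equation you would instead need to show that the difference $f(a)-F(a)$ is holomorphic on $\mathbb{C}^{*}$, i.e. that the poles of $ag(-a,q)$ and of $\frac{j(-a;q)}{J_{1}}m(a^{2},q,-a^{-1})$ cancel, which you never address, and then invoke triviality of the homogeneous holomorphic solutions. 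None of the shift computations in your step (iii) are actually carried out, and your fallback suggestion (find a suitable Bailey pair) is a hope rather than an argument.

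You also missed that the paper has already built the one-line proof: Proposition \ref{proposition:bilateral-mxqz-prop}, i.e. Bailey's ${}_2\psi_2$ identity (\ref{equation:3.4.7-ABII}), applied with $b=1/a$, says that
\begin{align*}
\frac{1}{1+a}\sum_{n=0}^{\infty}\frac{q^{n^2}}{(-aq,-q/a)_n}+\Big(1+\frac{1}{a}\Big)\sum_{n=0}^{\infty}q^{n+1}(-q/a,-aq)_n=\frac{j(-a;q)}{J_1}\,m(a^{2},q,-a^{-1}),
\end{align*}
and subtracting the known evaluation (\ref{equation:6.3.2-dual}) of the first sum, namely $(1+a)(1-ag(-a,q))$ divided by $1+a$, gives (\ref{equation:6.3.2-ABII-2ndDualB}) immediately; (\ref{equation:6.3.2-ABII-2ndDualA}) then follows by (\ref{equation:m-change-z}) as you observed. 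In short: your last step agrees with the paper, but the core of your proposal is an unexecuted functional-equation scheme whose uniqueness argument, as stated, would fail.
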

\begin{proof}
Using \cite[Entry $3.4.7$]{ABII} (\ref{equation:3.4.7-ABII}) with $b=1/a$, we find that
\begin{align*}
\frac{1}{1+a}\sum_{n=0}^{\infty}\frac{q^{n^2}}{(-aq,-q/a)_n}+\Big ( 1+\frac{1}{a}\Big )\sum_{n=0}^{\infty}q^{n+1}(-q/a,-aq)_n=\frac{j(-a;q)}{J_1}m(a^2,q,-a^{-1}).
\end{align*}
Identity (\ref{equation:6.3.2-ABII-2ndDualB}) follows by (\ref{equation:6.3.2-dual}).  Identity (\ref{equation:6.3.2-ABII-2ndDualA}) then follows from (\ref{equation:6.3.2-ABII-2ndDualB}) by using (\ref{equation:m-change-z}).
\end{proof}
\begin{remark} 
In \cite{L}, Lovejoy showed (slightly rewritten) via Bailey pairs that
\begin{equation}
1+\Big ( 1+\frac{1}{a}\Big )\sum_{n=0}^{\infty}q^{n+1}(-q/a,-aq)_n=aq^3f_{3,2,1}(q^6,-aq^3,q)/J_{1}.
\end{equation}
\end{remark}

\subsection{Entry $6.3.4$ \cite{ABII}, also \cite[p. 37]{RLN}}
If $a\ne 0$, then
\begin{align}
\sum_{n=0}^{\infty}\frac{q^{2n+1}}{(-aq,-q/a;q^2)_{n+1}}
&=\sum_{n=0}^{\infty}a^{3n+1}q^{3n^2+2n}(1-aq^{2n+1})\label{equation:ABII-6.3.4}\\
& \ \ \ \ \ \ \ \ \ \ -\frac{J_2}{j(-aq;q^2)}\sum_{n=0}^{\infty}(-1)^na^{2n+1}q^{n(n+1)}.\notag
\end{align}
The dual is just the identity from Proposition \ref{proposition:newgid}, with $q\rightarrow q^2$ and $x\rightarrow -aq$, i.e.,
\begin{equation}
\sum_{n=0}^{\infty}\frac{q^{2n^2+2n+1}}{(-aq,-q/a;q^2)_{n+1}}=qg(-aq,q^2).\label{equation:ABII-6.3.4-dual}
\end{equation}
Let us consider 
\begin{equation}
\sum_{n=-\infty}^{-1}\frac{q^{2n^2+2n+1}}{(-aq,-q/a;q^2)_{n+1}}.\label{equation:ABII-6.3.4-tail}
\end{equation}
Making the substitution $n\rightarrow -n$, (\ref{equation:ABII-6.3.4-tail}) becomes
\begin{equation}
\sum_{n=1}^{\infty}q^{2n-1}(-aq,-q/a;q^2)_{n-1}=\sum_{n=0}^{\infty}q^{2n+1}(-aq,-q/a;q^2)_{n}=:f(a).\label{equation:ABII-6.3.4-tail-2}
\end{equation}
Numerically, we find
\begin{equation}
f(q^2a)+aq^2-a^2q^3-a^3q^3f(a)=q\cdot(1-a^2q^2)\frac{j(-aq;q^2)}{J_2}.
\end{equation}
Using the heuristic and some more numerical work suggests
\begin{align}
f(a)&=-qg(-aq,q^2)+a\cdot \frac{j(-aq;q^2)}{J_2}m(a^2,q^2,-1)-\frac{1}{2}\frac{aj(aq;q^2)^3j(a^2;q^4)}{J_4^2j(a^4;q^4)}\label{equation:6.3.4-ABII-2ndDualA}\\
&=-qg(-aq,q^2)+a\cdot \frac{j(-aq;q^2)}{J_2}m(a^2,q^2,-a^{-1}q).\label{equation:6.3.4-ABII-2ndDualB}
\end{align}
\begin{theorem} Identities (\ref{equation:6.3.4-ABII-2ndDualA}) and (\ref{equation:6.3.4-ABII-2ndDualB}) are true
\end{theorem}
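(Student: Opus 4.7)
The plan is to mimic the proof of the preceding theorem, applying the bilateral Bailey-type identity (\ref{equation:3.4.7-ABII}) (Entry $3.4.7$ of \cite{ABII}) after the substitution $q\mapsto q^2$ together with the parameter specialization $a\mapsto a/q$, $b\mapsto 1/(aq)$.  Under this choice, the two Eulerian sums on the left-hand side of (\ref{equation:3.4.7-ABII}) collapse, after routine Pochhammer arithmetic, to
\[
\frac{1+aq}{a}\cdot qg(-aq,q^2)\quad\text{and}\quad \frac{1+aq}{a}\cdot f(a),
\]
respectively.  The first identification follows from (\ref{equation:ABII-6.3.4-dual}) (equivalently Proposition \ref{proposition:newgid} with $q\mapsto q^2$, $x\mapsto -aq$) once one writes $(-aq^3;q^2)_n=(-aq;q^2)_{n+1}/(1+aq)$ to absorb the shifted denominator; the second follows by reindexing $n\mapsto n+1$ in the $\sum_{n\ge 1}$ sum of (\ref{equation:3.4.7-ABII}) and pulling out the factor $1+1/(aq)$.

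For the right-hand side, Proposition \ref{proposition:bilateral-mxqz-prop} applied to the same specialization identifies the full bilateral expression as
\[
\frac{1+aq}{J_2}\, j(-aq;q^2)\, m(a^2,q^2,-a^{-1}q).
\]
To get here I would use the standard theta identifications $j(q/x;q)=j(x;q)$ and $j(q^2 x;q^2)=-j(x;q^2)/x$, which together give $j(-1/(aq);q^2)=j(-aq;q^2)/(aq)$, and the periodicity (\ref{equation:m-fnq-z}) to replace $m(a^2,q^2,-1/(aq))$ by $m(a^2,q^2,-a^{-1}q)$.  Equating the two sides, canceling the common prefactor $(1+aq)/a$, and solving for $f(a)$ produces (\ref{equation:6.3.4-ABII-2ndDualB}).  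Identity (\ref{equation:6.3.4-ABII-2ndDualA}) then follows at once by applying (\ref{equation:m-change-z}) with $x=a^2$, $q\mapsto q^2$, $z_0=-a^{-1}q$, $z_1=-1$ to rewrite $m(a^2,q^2,-a^{-1}q)$ as $m(a^2,q^2,-1)$ plus an explicit theta quotient, which after standard simplification collapses to the fraction displayed in (\ref{equation:6.3.4-ABII-2ndDualA}).

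The main obstacle is purely computational: locating the specialization $(A,B)=(a/q,1/(aq))$ that simultaneously produces the denominator of $g(-aq,q^2)$ in the hypergeometric sum and $(-aq,-q/a;q^2)_n$ in the power-series sum, and then tracking the several factors of $(1+aq)$ and two theta transformations without sign or shift errors.  No new functional equations are required; the whole argument uses only results already recorded in the preliminaries, and is on the same footing as the proof of the previous theorem.
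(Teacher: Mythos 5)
Your proposal is correct and follows the paper's own route exactly: specialize Entry $3.4.7$ (\ref{equation:3.4.7-ABII}) with $q\to q^2$, $a\to a/q$, $b\to 1/(aq)$, identify the first sum via (\ref{equation:ABII-6.3.4-dual}) and the second as $f(a)$, and then obtain (\ref{equation:6.3.4-ABII-2ndDualA}) from (\ref{equation:6.3.4-ABII-2ndDualB}) by (\ref{equation:m-change-z}). The Pochhammer and theta manipulations you indicate (including $j(-1/(aq);q^2)=j(-aq;q^2)/(aq)$ and the use of (\ref{equation:m-fnq-z})) are exactly the details the paper leaves implicit, and they check out.
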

\begin{proof}
Using \cite[Entry $3.4.7$]{ABII} (\ref{equation:3.4.7-ABII}) with $q\rightarrow q^2$, $a\rightarrow a/q$, $b\rightarrow 1/aq$, we find that
\begin{align*}
\frac{1+aq}{a}\sum_{n=0}^{\infty}\frac{q^{2n^2+2n+1}}{(-aq,-q/a;q^2)_{n+1}}&+q\Big ( 1+\frac{1}{aq}\Big )\sum_{n=0}^{\infty}q^{2n+1}(-q/a,-aq;q^2)_n\notag\\
&=(1+aq)\frac{j(-aq;q^2)}{J_2}m(a^2,q^2,-a^{-1}q).
\end{align*}
Identity (\ref{equation:6.3.4-ABII-2ndDualB}) follows by (\ref{equation:ABII-6.3.4-dual}).  Identity (\ref{equation:6.3.4-ABII-2ndDualA}) follows from (\ref{equation:6.3.4-ABII-2ndDualB}) by using (\ref{equation:m-change-z}).
\end{proof}
We note that the methods of \cite{L} give
\begin{equation}
\sum_{n=0}^{\infty}q^{2n+1}(-aq,-q/a;q^2)_{n}=qf_{3,2,1}(q^6,-aq^3,q^2)/J_2.
\end{equation}

\subsection{Entry $6.3.6$ \cite{ABII}, also \cite[p. 8]{RLN}}
If $a\ne 0$, then
\begin{align}
\Big ( 1+\frac{1}{a}\Big )\sum_{n=0}^{\infty}\frac{(q;q^2)_nq^{2n+1}}{(-aq,-q/a;q^2)_{n+1}}
&=\sum_{n=0}^{\infty}(-1)^na^{n}q^{n(n+1)/2}\label{equation:ABII-6.3.6}\\
& \ \ \ \ \ \ \ \ \ \ -\frac{J_1}{j(-aq,q^2)}\sum_{n=0}^{\infty}a^{3n}q^{n(3n+1)}(1-a^2q^{4n+2}).\notag
\end{align}
The dual is just identity (\ref{equation:RLNid4}) of Proposition \ref{proposition:eulerian-mxqz-prop}:
\begin{equation}
\Big ( 1+\frac{1}{a}\Big )\sum_{n=0}^{\infty}\frac{(q;q^2)_n(-1)^nq^{(n+1)^2}}{(-aq,-q/a;q^2)_{n+1}}=m(a,q,-1)-\frac{J_{1,2}^2}{2j(-a;q)}.\label{equation:ABII-6.3.6-dual}
\end{equation}
For the dual of second type, iterating the functional equation leads to
\begin{align}
f(a):=\Big ( 1+\frac{1}{a}\Big )&\sum_{n=0}^{\infty}\frac{q^{2n+1}(-aq,-q/a;q^2)_{n}}{(q;q^2)_{n+1}}\label{equation:ABII-6.3.6-dualtypeII}\\
&=-m(a,q,-1)+\frac{j(-aq;q^2)}{J_1}\Big ( 1-ag(-a,q^2)\Big )-\frac{1}{2}\frac{J_{1,2}^2}{j(-a;q)},\notag
\end{align}
where the functional equation is
\begin{equation}
f(q^2a)+1-qa-qa^2f(a)=\frac{j(-aq;q^2)}{J_1}\frac{(1-a^2q^2)}{aq}.
\end{equation}
Using the method of \cite{L} we have
\begin{align*}
\Big ( 1+\frac{1}{a}\Big )&\sum_{n=0}^{\infty}\frac{q^{2n+1}(-aq,-q/a;q^2)_{n}}{(q;q^2)_{n+1}}\\
&=\frac{1}{J_1}\Big ( -q^7f_{3,3,2}(-q^{19},-a^2q^{16},q^4)+qf_{3,3,2}(-q^{11},-a^2q^{8},q^4)\\
&\ \ \ \ \ +aq^{4}f_{3,3,2}(-q^{17},-a^2q^{12},q^4)-aq^{14}f_{3,3,2}(-q^{25},-a^2q^{20},q^4)\Big).
\end{align*}

\subsection{Entry $6.3.7$ \cite{ABII}, also \cite[p. 2]{RLN}}
If $a\ne 0$, then
\begin{align}
\Big ( 1+\frac{1}{a}\Big )&\sum_{n=0}^{\infty}\frac{(-q)_{2n}q^{2n+1}}{(aq,q/a;q^2)_{n+1}}
=-\sum_{n=0}^{\infty}(-a)^{n}q^{n(n+1)} +\frac{\overline{J}_{1,4}}{j(aq;q^2)}\sum_{n=0}^{\infty}(-a)^{n}q^{n(n+1)/2}.\label{equation:ABII-6.3.7}
\end{align}
The dual is just identity (\ref{equation:RLNid1}) of  Proposition \ref{proposition:eulerian-mxqz-prop}:
\begin{equation*}
(1+x^{-1})\sum_{n= 0}^{\infty}\frac{q^{n+1}(-q)_{2n}}{(qx,q/x;q^2)_{n+1}}=-m(x,q^2,q).
\end{equation*}
For the dual of second type, iterating the functional equation leads to
\begin{align}
f(a):=\Big ( 1+\frac{1}{a}\Big )&\sum_{n=0}^{\infty}\frac{(aq,q/a;q^2)_nq^{2n+1}}{(-q)_{2n+1}}\label{equation:ABII-6.3.7-dualtypeII}\\
&=2m(a,q^2,-1)-\frac{j(aq;q^2)}{\overline{J}_{1,4}}m(a,q,-1)
 -\frac{1}{2}\frac{J_1^5}{J_2^4}\frac{j(aq;q^2)}{j(-a;q)},\notag
\end{align}
where the functional equation is
\begin{equation}
f(q^2a)-2+af(a)=\frac{(1-aq)}{aq}\frac{j(aq;q^2)}{\overline{J}_{1,4}}.
\end{equation}
\begin{theorem}
Identity (\ref{equation:ABII-6.3.7-dualtypeII}) is true.
\end{theorem}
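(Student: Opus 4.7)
The plan is to prove (\ref{equation:ABII-6.3.7-dualtypeII}) by verifying that both sides satisfy the stated functional equation
\[
f(q^2 a) - 2 + a f(a) = \frac{(1-aq)}{aq}\frac{j(aq;q^2)}{\overline{J}_{1,4}}
\]
and then appealing to uniqueness. The functional equation for the Eulerian series $f(a)$ is already recorded just before the theorem statement; it is obtained from the defining sum by writing $(aq;q^2)_{n+1}=(1-aq)(aq^3;q^2)_n$ together with the companion identity for $(q/a;q^2)_{n+1}$, then shifting $a\mapsto q^2 a$, reindexing $n\mapsto n-1$, and collecting the stray $n=0$ term against the prefactor $1+1/(q^2 a)$ to produce the stated theta correction.

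The substantive step is to verify the same functional equation for the right-hand side
\[
R(a):=2m(a,q^2,-1)-\frac{j(aq;q^2)}{\overline{J}_{1,4}}m(a,q,-1)-\frac{1}{2}\frac{J_1^5}{J_2^4}\frac{j(aq;q^2)}{j(-a;q)}.
\]
I would use (\ref{equation:m-fnq-x}) at base $q^2$ to get $m(q^2 a,q^2,-1)=1-am(a,q^2,-1)$, and iterate the base-$q$ version to get $m(q^2 a,q,-1)=1-qa+qa^2 m(a,q,-1)$. The theta shifts follow from $j(q^n x;q)=(-x)^{-n}q^{-\binom{n}{2}}j(x;q)$: explicitly, $j(aq^3;q^2)=-(aq)^{-1}j(aq;q^2)$ and $j(-q^2 a;q)=a^{-2}q^{-1}j(-a;q)$. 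Plugging these into $R(q^2 a)+aR(a)$ and collecting, the $m(a,q,-1)$ contributions cancel exactly --- the piece $\tfrac{qa^2}{aq}\cdot\tfrac{j(aq;q^2)}{\overline{J}_{1,4}}m(a,q,-1)=a\cdot\tfrac{j(aq;q^2)}{\overline{J}_{1,4}}m(a,q,-1)$ arising from $R(q^2 a)$ balances the corresponding piece from $aR(a)$ --- and likewise the $m(a,q^2,-1)$ contributions and the purely theta contributions cancel, leaving precisely $2+\tfrac{(1-aq)}{aq}\tfrac{j(aq;q^2)}{\overline{J}_{1,4}}$, as required.

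Finally, $g(a):=f(a)-R(a)$ satisfies the homogeneous relation $g(q^2 a)=-a g(a)$. A direct expansion in $q$ shows that both $f(a)$ and $R(a)$ have leading expansion $(1+1/a)q+O(q^2)$, so $g=O(q^2)$; multiplication by $j(a;q^2)$ turns $g$ into a $q^2$-periodic function of $a$ whose Laurent coefficients in $q$ vanish to leading order, and the homogeneous functional equation then recursively forces all higher coefficients to vanish, yielding $f\equiv R$. The main technical hurdle is the delicate cancellation of the $m(a,q,-1)$ terms in the verification for $R$: it is forced by, and in turn pins down, the specific coefficient $j(aq;q^2)/\overline{J}_{1,4}$ appearing in the claim, so the computation must be carried out carefully through the theta shifts.
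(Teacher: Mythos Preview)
Your route is genuinely different from the paper's. The paper expresses the Eulerian sum as a Hecke-type double sum $qf_{2,2,1}(aq^3,-q^2,q)/\overline{J}_{1,4}$ via Lovejoy's conjugate Bailey pair and then reads off the Appell--Lerch form from Proposition~\ref{proposition:f221}; no uniqueness argument is needed. Your functional-equation approach is more elementary in that it avoids the Bailey machinery, and your verification that $R(a)$ satisfies $R(q^2a)+aR(a)=2+\tfrac{(1-aq)}{aq}\tfrac{j(aq;q^2)}{\overline{J}_{1,4}}$ is correct.

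The gap is in the uniqueness step. The homogeneous equation $g(q^2a)=-ag(a)$ has nontrivial meromorphic solutions whose only poles lie at $a=-q^k$: for instance $c(q)\,j(aq;q^2)/j(-a;q)$ for any $c(q)$, as one checks from the theta shifts you already used. Taking $c(q)=O(q^2)$ gives a nonzero solution that is still $O(q^2)$ as a $q$-series, so your initial condition ``$g=O(q^2)$'' does not by itself force $g\equiv 0$. Your recursion sketch implicitly assumes the $q$-coefficients of $g$ are Laurent \emph{polynomials} in $a$; but while this is clear for $f$, for $R$ it is exactly the statement that the simple poles at $a=-q^k$ coming from $m(a,q^2,-1)$, $m(a,q,-1)$, and $1/j(-a;q)$ cancel. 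That cancellation does hold (by the functional equation it suffices to check the residues at $a=-1$ and $a=-q$, which reduce to the eta-quotient identities $\overline{J}_{0,1}=2J_2^2/J_1$, $\overline{J}_{0,2}=2J_4^2/J_2$, $\overline{J}_{1,4}=J_2^2/J_1$, $\overline{J}_{1,2}=J_2^5/(J_1^2J_4^2)$), but it is a genuine computation that your argument needs and does not supply. Once $R$ is known to be holomorphic on $\mathbb{C}^*$, the Laurent-coefficient recursion $c_n=-q^{-2n}c_{n-1}$ immediately forces $g\equiv 0$, and your proof is complete.
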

\begin{proof} The methods of \cite{L} yield
\begin{equation}
\Big ( 1+\frac{1}{a}\Big )\sum_{n=0}^{\infty}\frac{(aq,q/a;q^2)_nq^{2n+1}}{(-q)_{2n+1}}=qf_{2,2,1}(aq^3,-q^2,q)/\overline{J}_{1,4}.
\end{equation}
The result then follows from Proposition \ref{proposition:f221}.
\end{proof}
Equation ({\ref{equation:ABII-6.3.7-dualtypeII}}) is the dual of  \cite[Entry $5.4.4$]{ABII}, also \cite[p. 15]{RLN}:
For $a\ne0$,
\begin{align}
\Big ( 1+\frac{1}{a}\Big )\sum_{n=0}^{\infty}\frac{(aq,q/a;q^2)_nq^{n}}{(-q)_{2n+1}}
&=\sum_{n=0}^{\infty}(-1)^n (a^n+a^{-n-1})q^{n(n+1)}\label{equation:ABII-5.4.4}.
\end{align}

\subsection{Entry $6.3.9$ \cite{ABII}, also \cite[p. 29]{RLN}}
For $a\ne 0$,
\begin{align}
\sum_{n=0}^{\infty}\frac{(q;q^2)_{n}q^{2n}}{(-aq^2,-q^2/a;q^2)_{n}}
&=(1+a)\sum_{n=0}^{\infty}(-1)^na^{n}q^{n(n+1)/2}\label{equation:ABII-6.3.9}\\
& \ -\frac{a(1+a)J_1}{j(-a;q^2)}\sum_{n=0}^{\infty}a^{3n}q^{3n^2+2n}(1-aq^{2n+1}).\notag
\end{align}
The dual is just identity (\ref{equation:RLNid2}) of  Proposition \ref{proposition:eulerian-mxqz-prop}:
\begin{equation}
\sum_{n=0}^{\infty}\frac{(q;q^2)_{n}(-1)^nq^{n^2}}{(-a;q^2)_{n+1}(-q^2/a;q^2)_{n}}= m(a,q,-1)+\frac{J_{1,2}^2}{2j(-a;q)}.\label{equation:6.3.9-dual-final}
\end{equation}
For the dual of second type, iterating the functional equation leads to
\begin{align}
f(a):=\Big ( 1+\frac{1}{a}\Big)&\sum_{n=0}^{\infty}\frac{q^{2n+2}(-aq^2,-q^2/a;q^2)_{n}}{(q;q^2)_{n+1}}\label{equation:ABII-6.3.9-dualtypeII}\\
&=-m(a,q,-1)+\frac{j(-a;q^2)}{J_1}\frac{q}{a}g(-aq,q^2)+\frac{1}{2}\frac{j(a;q)J_{1,2}}{j(a^2;q^2)},\notag
\end{align}
where the functional equation is
\begin{equation}
f(q^2a)+1-qa-qa^2f(a)=\frac{(1-aq)}{a}\frac{j(-a;q^2)}{J_1}.\label{equation:ABII-6.3.9-func}
\end{equation}
Using the method of \cite{L} we have
\begin{align*}
\Big ( 1+\frac{1}{a}\Big)&\sum_{n=0}^{\infty}\frac{q^{2n+2}(-aq^2,-q^2/a;q^2)_{n}}{(q;q^2)_{n+1}}\\
&=\frac{1}{J_1}\Big ( aq^6f_{3,3,2}(-q^{19},-a^2q^{14},q^4)-q^5f_{3,3,2}(-q^{17},-a^2q^{14},q^4)\\
&\ \ \ \ \ -aq^{11}f_{3,3,2}(-q^{23},-a^2q^{18},q^4)+q^2f_{3,3,2}(-q^{13},-a^2q^{10},q^4)\Big).
\end{align*}

\subsection{Entry $6.3.11$ \cite{ABII}, also \cite[p. 4]{RLN}}
For $a\ne 0$,
\begin{align}
\sum_{n=0}^{\infty}\frac{(q;q^2)_{n}q^{n}}{(-aq,-q/a)_{n}}
&=(1+a)\sum_{n=0}^{\infty}(-1)^na^{n}q^{n(n+1)/2}\label{equation:ABII-6.3.11}\\
& \ \ \ \ \ \ \ \ \ \ -\frac{a(1+a)J_{1,2}}{j(-a;q)}\sum_{n=0}^{\infty}(-1)^na^{2n}q^{n(n+1)}.\notag
\end{align}
The dual is just identity (\ref{equation:RLNid3}) of  Proposition \ref{proposition:eulerian-mxqz-prop}:
\begin{equation*}
\sum_{n= 0}^{\infty}{}^*\frac{(-1)^n(q;q^2)_n}{(-x)_{n+1}(-q/x)_n}=m(x,q,-1).
\end{equation*}
For the dual of second type, iterating the functional equation leads to
{\allowdisplaybreaks \begin{align}
f(a):=\Big ( 1+\frac{1}{a}\Big )&\sum_{n=0}^{\infty}\frac{(-aq,-q/a;q)_nq^{n+1}}{(q;q^2)_{n+1}}\label{equation:ABII-6.3.1-dualtypeII}\\
&=-m(a,q,-1)+\frac{j(-a;q)}{J_{1,2}}m(a^2,q^2,-1) -a\frac{J_4^3}{J_2^3}\frac{j(a;q)j(qa^2;q^2)}{j(a^4;q^4)},\notag
\end{align}}%
where the functional equation is
\begin{equation*}
f(qa)+1+af(a)=\frac{1}{a}\frac{j(-a;q)}{{J}_{1,2}}.
\end{equation*}
\begin{theorem}
Identity (\ref{equation:ABII-6.3.1-dualtypeII}) is true.
\end{theorem}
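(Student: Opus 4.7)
The plan is to follow the strategy used in the proofs of identities (\ref{equation:6.3.2-ABII-2ndDualB}), (\ref{equation:6.3.4-ABII-2ndDualB}), and especially (\ref{equation:ABII-6.3.7-dualtypeII}): convert the Eulerian form into a Hecke-type double sum $f_{a,b,c}$ and then apply a general expansion theorem to rewrite it in terms of Appell--Lerch sums and theta functions.  First I would apply the Bailey pair machinery of Lovejoy \cite{L} to the left-hand side of (\ref{equation:ABII-6.3.1-dualtypeII}).  By analogy with (\ref{equation:ABII-6.3.6-dualtypeII}) and (\ref{equation:ABII-6.3.9-dualtypeII}), this should produce an expression of the form $q^{c}f_{a,b,c}(\cdot,\cdot,\cdot)/J_{\ast}$ (or a short linear combination of such terms).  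The natural candidates for the Bailey pair are those that produce the summand $(-aq,-q/a;q)_{n}q^{n+1}/(q;q^2)_{n+1}$ upon insertion into the bilinear identity (\ref{equation:bp-conj-id}); experience with the pattern of the preceding proofs in this section suggests this step is largely mechanical once the right pair has been identified.

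Second, once the sum is written as a Hecke-type double sum, I would apply the appropriate specialization of \cite[Theorem $0.4$]{HM}---with parameters $a,b,c$ chosen to match step one---to obtain a formula purely in Appell--Lerch sums $m(x,q,z)$ and theta functions.  For the closely related identity (\ref{equation:ABII-6.3.7-dualtypeII}) the relevant specialization was Proposition \ref{proposition:f221} (the $a=b=2$, $c=1$ case); here the expected specialization is a different triple, chosen so that the two output bases $q$ and $q^2$ of the Appell--Lerch pieces on the right of (\ref{equation:ABII-6.3.1-dualtypeII}) arise naturally.

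Finally, using the functional equations (\ref{equation:m-fnq-z})--(\ref{equation:m-fnq-zflip}), and in particular the change-of-$z$ formula (\ref{equation:m-change-z}), I would reduce the collection of Appell--Lerch sums appearing to the two pieces $-m(a,q,-1)$ and $\frac{j(-a;q)}{J_{1,2}}m(a^2,q^2,-1)$ promised in (\ref{equation:ABII-6.3.1-dualtypeII}), with any residual theta contributions consolidating into the single term $-a J_4^3 j(a;q)j(qa^2;q^2)/(J_2^3 j(a^4;q^4))$ after application of Jacobi's triple product identity.  As a consistency check, one can verify that the resulting expression satisfies the functional equation $f(qa)+1+af(a)=\frac{1}{a}j(-a;q)/J_{1,2}$ recorded just after (\ref{equation:ABII-6.3.1-dualtypeII}).

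The main obstacle will be the final theta-function consolidation.  Because the identity mixes Appell--Lerch sums at bases $q$ and $q^2$, the Hecke-type expansion will typically output sums at both bases together with several auxiliary theta quotients, and collapsing the auxiliary pieces to the single stated theta quotient via (\ref{equation:m-change-z}) and triple-product manipulations is the delicate step.  A secondary difficulty is identifying the Bailey pair in step one, but since the preceding identities in this section all follow a uniform scheme one should be findable by pattern-matching against the Eulerian form.
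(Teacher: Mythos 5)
Your plan is essentially the paper's proof: Lovejoy's Bailey-pair method gives $\big(1+\tfrac{1}{a}\big)\sum_{n\ge 0}\frac{(-aq,-q/a;q)_nq^{n+1}}{(q;q^2)_{n+1}}=qf_{2,2,1}(q^3,-q^2a,q)/J_{1,2}$, and expanding that single Hecke-type double sum in Appell--Lerch sums finishes the argument. The one correction is that the needed specialization is exactly Proposition \ref{proposition:f221} again, not a different triple: the $f_{2,2,1}$ expansion already produces one $m(\cdot,q,-1)$ and one $m(\cdot,q^2,-1)$, so both bases in (\ref{equation:ABII-6.3.1-dualtypeII}) arise at once and the theta consolidation you flagged as delicate is routine.
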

\begin{proof}
The method of \cite{L} yields,
\begin{equation*}
\Big ( 1+\frac{1}{a}\Big )\sum_{n=0}^{\infty}\frac{(-aq,-q/a;q)_nq^{n+1}}{(q;q^2)_{n+1}}=qf_{2,2,1}(q^3,-q^2a,q)/J_{1,2}.
\end{equation*}
The result then follows from Proposition \ref{proposition:f221}.  
\end{proof}
Equation (\ref{equation:ABII-6.3.1-dualtypeII}) is also the dual for \cite[Entry $5.4.3$]{ABII}, \cite[Entry $6.4.6$]{ABII}, also \cite[p. 4]{RLN} :
For $a\ne0$,
\begin{align}
\Big ( 1+\frac{1}{a}\Big )\sum_{n=0}^{\infty}{}^*\frac{(-1)^n(-aq,-q/a)_n}{(q;q^2)_{n+1}}
&=\frac{1}{2}\sum_{n=0}^{\infty}(-1)^n (a^n+a^{-n-1})q^{n(n+1)/2}\label{equation:ABII-5.4.3}.
\end{align}

\section{acknowledgements} We would like to thank Dean Hickerson for his help in finding several of the identities in Proposition \ref{proposition:eulerian-mxqz-prop} and for his helpful suggestions.  We would also like to thank the referees for their thorough reading of the manuscript and their detailed comments.

\section{concluding remarks}
If we are given one type of identity and the shifts $q\rightarrow q^{-1}$ and $n\rightarrow -n$ make sense, then the techniques in this paper are very effective in determining the structure of the dual identity.   We point out that for the two fifth order functions $\chi_0(q)$ and $\chi_1(q)$ that the multiplicities of the Appell--Lerch sums and partial theta functions do not agree.    Although we do not have short proofs of identities (\ref{equation:mock-chi1-5th-dualB}), (\ref{equation:ABII-6.3.6-dualtypeII}), (\ref{equation:ABII-6.3.9-dualtypeII}), the identities are included for a sense of completeness.   The natural next step in developing the duality theory between Appell--Lerch sums and partial theta functions is to determine if there are finite versions of the mock theta functions which can simultaneously prove mock theta function identities as well as the corresponding partial theta function identities, and if so find them.


\begin{thebibliography}{999999}
\bibitem{A0} G. E. Andrews {\em A polynomial identity which implies the Rogers-Ramanjuan identities}, Scripta Math. {\bf 28} (1970), 297-305.

\bibitem{A1} G. E. Andrews, {\em An introduction to Ramanujan's ``lost" notebook}, Amer. Math. Monthly, {\bf 86} (1979), 89-108.

\bibitem{AM} G. E. Andrews, {\em Mordell integrals and Ramanujan's ``lost'' notebook,} 10-48, Analytic Number theory, Philadelphia (1980), Lect. Notes Math. 889 (1981).

\bibitem{ALNI} G. E. Andrews, {\em Ramanujan's ``lost" notebook, I: Partial $\theta$-functions}, Adv. in Math, {\bf 41} (1981), no. 2, 186-208.

\bibitem{A2} G. E. Andrews, {\em $q$-orthogonal polynomials, Rogers-Ramanujan identities, and mock theta functions}, Proceedings of the Steklov Institute Dedicated to the 75th Birthday of A. A. Karatsuba, 276:21-32 (2012).

\bibitem{A3} G. E. Andrews, {\em The hard-hexagon model and Rogers-Ramanujan type identities,} Proc. Nat. Acad. Sci., {\bf 78} (1981), 5290-5292.

\bibitem{ABI} G. E. Andrews, B. C. Berndt,  {\em Ramanujan's Lost Notebook, Part I}, Springer, New York, (2005).

\bibitem{ABII} G. E. Andrews, B. C. Berndt,  {\em Ramanujan's Lost Notebook, Part II}, Springer, New York, (2009).

\bibitem{AB} G. E. Andrews, R. J. Baxter, {\em Lattice gas generalization of the hard hexagon model III: $q$-trinomial coefficients}, J. of Stat. Phys., {\bf 47} (1987), 297-330.

\bibitem{ABF} G. E. Andrews, R. J. Baxter, P. J. Forrester, {\em Eight-vertex SOS model and generalized Rogers-Ramanujan-type identities}, J. of Stat. Phys., {\bf 35} (1984), 193-266.

\bibitem{B} W. N. Bailey, {\em Some identities on combinatory analysis}, Proc. Lond. Math. Soc. (2) {\bf 49} (1947),  421-435.

\bibitem{Ba} W. N. Bailey, {\em On the basic bilateral hypergeometric series ${}_2\psi_2$}, Quart. J. Math. (Oxford) {\bf 1} (1950), 194-198.

\bibitem{Bax} R. J.  Baxter, {\em Hard hexagons: exact solution}, J. Phys. A, {\bf 13} (1980), 161--170.

\bibitem{BFR} K. Bringmann, A. Folsom, R. Rhoades, {\em Partial and mock theta functions as $q$-hypergeometric series}, Ramanujan Journal, special issue, ``Ramanujan's 125th anniversary special volume,'' {\bf 29}, Numbers 1-3 (2012), 295-310.

\bibitem{CH1} Y.-S. Choi {\em Tenth order mock theta functions in Ramanujan's Lost Notebook}, Invent. Math. {\bf 136} (1999),  497-569.

\bibitem{CH2} Y.-S. Choi {\em Tenth order mock theta functions in Ramanujan's Lost Notebook II}, Adv. Math. {\bf 156} (2000),  180-285.

\bibitem{DMZ} A. Dabholkar, S. Murthy, D. Zagier, {\em Quantam Black Holes, Wall Crossing, and Mock Modular Forms}, to appear in Cambridge Monographs in Mathematical Physics.

\bibitem{FOR1} A. Folsom, K. Ono, R. C. Rhoades, {\em Mock theta functions and quantum modular forms}, Forum of Math. Pi, {\bf 1} (2013), e2, 27 pages.

\bibitem{FOR2} A. Folsom, K. Ono, R. C. Rhoades, {\em Ramanujan's radial limits}, Proceedings of the Ramanujan 125 Conference at the University of Florida, accepted for publication.

\bibitem{H1} D. R. Hickerson, {\em A proof of the mock theta conjectures}, Inv. Math., {\bf 94} (1988), no. 3, 639-660.

\bibitem{HM} D. R. Hickerson, E. T. Mortenson, {\em Hecke-type double sums, Appell--Lerch sums, and mock theta functions, I}, Proc. London Math. Soc., accepted for publication.

\bibitem{LZ} R. Lawrence, D. Zagier, {\em Modular forms and Quantum Invariants of $3$-manifolds}, Asian J. Math., {\bf 3} (1999), 93-108.

\bibitem{Le} M. Lerch, {\em Pozn\'amky k theorii funkc\'i elliptick\'ych}, Rozpravy \v Cesk\'e Akademie C\'isa\v re Franti\v ska Josefa pro v\v edy, slovesnost a um\v en\' i v praze, {\bf 24} (1892),  465-480.

\bibitem{L} J. Lovejoy, {\em Ramanujan-type partial theta identities and conjugate Bailey pairs}, Ramanujan Journal, {\bf 29} (2012), 51-67.

\bibitem{M1} E. T. Mortenson, {\em On three third order mock theta functions and Hecke-type double sums}, Ramanujan Journal, {\bf 30} (2013), no. 2, 279-308.

\bibitem{M2} E. T. Mortenson, {\em Ramanujan's radial limits and mixed mock modular bilateral $q$-hypergeometric series}, Proc. Edin. Math. Soc., accepted for publication.

\bibitem{RLN} S. Ramanujan, {\em The Lost Notebook and Other Unpublished Papers}, Narosa Publishing House, New Delhi, 1988.

\bibitem{R1} S. Ramanujan, {\em Collected Papers}, Cambridge University Press, London and New York, $1927$; reprinted Chelsea, New York.

\bibitem{Sch} I. J. Schur, {\em Ein Beitrag zur additiven Zahlentheorie und zur Theorie der Kettenbr\"uche}, S.-B. Preuss. Akad. Wiss. Phys.-Math. Kl. (1917), 315-336.

\bibitem{S} L. J. Slater, {\em A new proof of Rogers's transformations of infinite series}, Proc. London Math. Soc. (2) {\bf 54}, (1952), 147-167.

\bibitem{Wa1} S. O. Warnaar, {\em Partial theta functions I: Beyond the lost notebook}, Proc. London Math. Soc., (2) {\bf 87} (2003), 363-395.

\bibitem{W3} G. N. Watson, {\em The final problem: an account of the mock theta functions,} J. London Math. Soc., {\bf 11} (1936), 55-80.

\bibitem{R} R. C. Rhoades, {\em On Ramanujan's definition of mock theta functions}, Proc. Natl. Acad. Sci. USA, {\bf 110}, no. 19, 7592-7594.

\bibitem{Z10}S. P. Zwegers, {\em The tenth order mock theta functions revisted}, Bull. London Math. Soc., (2) {\bf 42} (2010), 301-311.

\end{thebibliography}
\end{document}